\ifcsname ifsmall\endcsname\else
  \expandafter\let\csname ifsmall\expandafter\endcsname
                  \csname iffalse\endcsname
\fi

\ifsmall
\documentclass[oneside]{amsart}
\usepackage[paperwidth=169mm,paperheight=239mm]{geometry}
\else
\documentclass{amsart}
\fi

\usepackage{graphicx}

\setlength{\parskip}{0.07cm}

\renewcommand{\epsilon}{\varepsilon}

\newcommand{\R}{\mathbb{R}}
\newcommand{\T}{\mathbb{T}}

\newcommand{\dd}{\,d}
\newcommand{\nl}{\left\lVert}
\newcommand{\nr}{\right\rVert}

\renewcommand{\limsup}{\varlimsup}
\renewcommand{\liminf}{\varliminf}

\newcommand{\seq}[1]{(#1_k)_{k\geq 0}}
\newcommand{\M}{\mathcal{M}}
\newcommand{\product}[1]{\mu_{\M}}
\newcommand{\norm}[1]{\nl #1\nr_{\M}}
\renewcommand{\S}{\mathcal{S}}
\newcommand{\Scomp}{\widehat{\S}}
\newcommand{\C}{\mathcal{C}}
\newcommand{\Ccomp}{\widehat{\C}}

\newcommand{\ST}{\S_\T}

\newcommand{\normT}[1]{\norm{#1}}
\newcommand{\supp}{\operatorname{supp}}

\renewcommand{\and}{\hspace{8pt} \text{and} \hspace{8pt}}
\newcommand{\q}{q}

\newcommand{\tM}{\widetilde{M}}

\newtheorem{thm1}{Theorem}[section]
\newtheorem{theorem}[thm1]{Theorem}
\newtheorem{lemma}[thm1]{Lemma}
\newtheorem{corollary}[thm1]{Corollary}
\newtheorem{proposition}[thm1]{Proposition}

\theoremstyle{definition}
\newtheorem{definition}[thm1]{Definition}
\newtheorem{conjecture}{Conjecture}

\theoremstyle{remark}
\newtheorem*{remark}{Remark}

%
%
%
%
%
%
%

\keywords{Generalized Sobolev spaces, Denjoy-Carleman classes, spline approximation, small exponents}
\subjclass[2010]{Primary 46E35; Secondary 26E10, 41A15}

\title{Carleman-Sobolev classes for small exponents}
\author{Gustav Behm}\thanks{The research of the first named author was supported by H{\aa}kan Hedenmalm's grant from the Swedish Research Council (VR: 2012-3122).}
\author{Aron Wennman}

\email[Gustav Behm]{gbehm@math.kth.se}
\email[Aron Wennman]{aronw@math.kth.se \textrm{(Corresponding author)}}
\address{Department of Mathematics, KTH Royal Institute of Technology,\newline \indent SE-100 44 Stockholm, Sweden}

\date{\today}

\begin{document}

\begin{abstract}
This paper is devoted to the study of a generalization of Sobolev spaces for small $L^{p}$ exponents, i.e.~$0<p<1$. We consider spaces defined as abstract completions of certain classes of smooth functions with respect to weighted quasi-norms involving derivatives of all orders, simultaneously inspired by Carleman classes and classical Sobolev spaces. If the class is restricted with a growth condition on the supremum norms of the derivatives, we prove that there exists a condition on the weight sequence which guarantees that the resulting space can be embedded into $C^{\infty}(\R)$. This condition is necessary for such an embedding to be possible, up to some regularity conditions on the weight sequence. We also show that the growth condition is necessary, in the sense that if we drop it entirely we can naturally embed $L^p$ into the resulting completion.
\end{abstract}

\maketitle

\setcounter{section}{-1}
\section{Introduction}
The purpose of this paper is to introduce a class of functions, simultaneously inspired by Sobolev spaces on the real line and so-called Carleman classes from the study of quasi-analytic functions. This shall be done for small $L^p$-exponents, that is, $p$ with $0<p<1$. In the more usual case, when $p\geq1$, one can define Sobolev spaces in several equivalent ways. In the case $0<p<1$, however, it turns out that these definitions are not equivalent. In this paper we will work with the definition of Sobolev spaces as abstract completions of a set of smooth functions with respect to some norm.

In \cite{Peetre} Peetre chooses to define Sobolev spaces in this way, using the Sobolev norm
$$
\nl f\nr_{k,p}=\left(\lVert f\rVert_{p}^{p}+\lVert f'\rVert_{p}^{p}+\ldots+\lVert f^{(k)}\rVert_{p}^{p}\right)^{1/p}.
$$
Although this seems quite natural, it soon turns out that this leads to pathology. Already Douady showed that if the space $W^{1,p}$ were to be possible to view as a function space, one would have {\em functions}, for a lack of a better word, which are identically zero, but with a derivative that is equal to one almost everywhere!
There is no original reference for this, but the Douady example is described in detail in \cite{Peetre} 
Peetre \cite{Peetre} went on to show that the situation is even worse. Completely contradictory to the case when $p\geq1$ one can embed $L^p$ naturally into $W^{k,p}$. Moreover there is actually an isomorphism
$$
W^{k,p}\cong L^{p}\oplus L^{p}\oplus \ldots \oplus L^{p}\cong L^{p},
$$
meaning that in the completion there is a complete uncoupling between a function and its different derivatives. It is worth noting that by a classical theorem of \mbox{Day \cite{Day}} which characterizes the dual of $L^p$, the dual of $W^{k,p}$ is actually trivial, so these spaces cannot even be regarded as spaces of distributions. Peetre was led to consider Sobolev spaces for $0<p<1$ by considering a problem in non-linear approximation theory. More precisely, he asked when best approximation by spline functions of a given order is possible. For a further discussion of this topic, see \cite[Chapter~11]{PeetreBook}.

The other kind of function classes that we have mentioned are the Carleman classes. For a weight sequence $\M=\seq{M}$, a function $f\in C^{\infty}$ is said to belong to the Carleman class $C\{M_k\}$ if there exists some $b>0$ such that
$$
\frac{\nl f^{(k)}\nr_{\infty}}{M_k}\leq b^k, \quad k\geq0.
$$
The constant $b$ contributes in some sense to a softening of the topology of $C\{ M_k\}$. While being important in some respects, for many purposes one can instead study $g(x)=f(b^{-1}x)$, for which we get $\nl g^{(k)}\nr_{\infty}=b^{-k}\nl f^{(k)}\nr_{\infty}$. We could thus study $f$ for which
$$
\sup_{k\geq0}\frac{\nl f^{(k)}\nr_{\infty}}{M_k}\leq 1.
$$
When $p\geq1$ there is a connection between the supremum of a function $f$ and the $L^p$-norms of $f$ and $f'$, at least it is clear for compactly supported $f$. Thus one can sometimes substitute the supremum norms for $L^p$-norms and end up with very similar classes. For further discussion on this, see \cite[Chapter~V]{Katznelson}.

One of our starting points is the question regarding what happens to $W^{k,p}$ if we let $k$ tend to $\infty$ in the presence of a weight sequence $\M$. We shall fix an exponent $p$ with $0<p<1$ and study the completion of different classes of $C^{\infty}$-smooth test functions with respect to the quasi-norm
$$
\nl f\nr_{\M}=\sup_{k\geq0}\frac{\nl f^{(k)}\nr_{p}}{M_k}.
$$
This is actually more inspired by expressions encountered in works related to Carleman classes, but for some purposes this will be interchangeable with the more Sobolev-flavored weighted expression
$$
\left(\frac{\lVert f\rVert_{p}^{p}}{M_0^{p}}+\ldots+\frac{\lVert f^{(k)}\rVert_{p}^{p}}{M_k^{p}}+\ldots \right)^{1/p}.
$$
This Carleman-type norm is chosen in the hope of making computations tractable, while retaining the essence of a Sobolev space of infinite order. It is because of these considerations we choose the nomenclature {\em Carleman-Sobolev classes}.

In this paper we present three results regarding such completions. We show that the pathology encountered for $W^{k,p}$ largely remains if we consider the completion $\Ccomp$ of the subset $\C$ of $C^{\infty}$-smooth functions with $\norm{f}<\infty$. Indeed, we find a continuous embedding $L^{p}(\R)\hookrightarrow \Ccomp$. This holds no matter what restrictions we put on the sequence $\M$.

The second and third results concerns the completion of the smaller class $\S$, consisting of smooth functions with finite norm $\norm{f}<\infty$ whose derivatives satisfy the the growth condition
$$
\limsup_{k\to\infty} \nl f^{(k)}\nr_{\infty}^{\q^k}\leq 1,
$$
where $q=1-p$ to simplify notation. Provided that $\M$ does not grow too quickly, expressed as
$$
\product{M}:=\prod_{k=0}^{\infty}M_k^{\q^k}<\infty
$$
the situation turns out to be completely different. In this case we can find a continuous embedding $\Scomp \hookrightarrow C(\R)$, and assert that $\Scomp$ can be regarded as a subset of $C^{\infty}$. These conditions were observed by Hedenmalm and communicated to the authors. They turn up when employing an iterative scheme in the attempt to bound the supremum norm of a function by the $p$-norms of the function and its derivatives in the proof of Theorem~\ref{completion-is-smooth} below. Hedenmalm invented the scheme after using similar techniques in work with Borichev \cite{HedenmalmBorichev}. It is inspired by the proof of an inequality due to Hardy-Littlewood, see Garnett's book \cite[Lemma~3.7]{Garnett}.

This condition is somewhat akin to the celebrated condition
$$\sum_{k=0}^\infty \frac{M_k}{M_{k+1}} = \infty$$
which by the Denjoy-Carleman Theorem \cite{Cohen} is the precise condition when $C\{M_k\}$ is a quasi-analytic class.

There seems to be a similar sharpness regarding the condition $\product{M}<\infty$. Under certain regularity assumptions on $\M$ we show that if $\product{M}=\infty$ such a continuous embedding is impossible. That is, there can be no constant $C$ for which
$$
\nl f\nr_{\infty}\leq C\nl f\nr_{\M},\quad f\in\S.
$$

This paper is organized as follows. In Section~$1$ we collect relevant definitions and preliminary results. In Section~$2$ we study the space $\Scomp$ when $\product{M}<\infty$. Section~$3$ is devoted to investigating the conditions defining $\S$: $\product{M}<\infty$ and \mbox{$\limsup \nl f^{(k)}\nr_{\infty}^{\q^k}\leq 1$}. Under certain additional assumptions we manage to show that $\product{M}<\infty$ is necessary for an embedding into $C(\R)$ to exist. We then move on to study the more degenerate case which one gets from the completion of the bigger class $\C$. In Section~$4$ we extend some of the results to the setting of the unit circle $\T$, and we end the paper with a discussion and two conjectures in Section~$5$.

We would like to thank our advisor H{\aa}kan Hedenmalm for suggesting this research topic.


\section{Preliminaries and definitions}
Fix a number $p$ with $0<p<1$ and its corresponding number $q=1-p$. This number will serve as exponent for the $L^{p}$-quasi-norm which as usual is defined as
$$
\nl f\nr_{p}=\left(\int_{\R}\lvert f(x)\rvert^{p}\dd x\right)^{1/p}
$$
for measurable functions $f$ on the real line $\R$. The topological vector space consisting of all measurable functions $f$ such that $\nl f\nr_{p}<\infty$ is denoted by $L^{p}(\R)$. When $p\geq$ this expression is a norm and $L^{p}(\R)$ is a Banach space. In our case, however, it is merely a quasi-norm and the space is a quasi-Banach space. By a quasi-norm we mean that $\nl \cdot \nr_{p}$ is a homogeneous, positive definite real-valued function such that a quasi-triangle inequality, i.e. the inequality
$$
\nl f+g\nr_{p}\leq K\left(\nl f\nr_{p}+\nl g\nr_{p}\right), \quad f,g\in L^{p}(\R)
$$
holds only for some constant $K>1$. The best such constant is quite readily seen to be $2^{\frac{1-p}{p}}$.
However the ordinary triangle inequality still remains valid in the following sense:
\begin{equation}\label{triangle}
\nl f+g\nr_p^p\leq \nl f\nr_{p}^p+\nl g\nr_{p}^p, \quad f,g\in L^{p}(\R).
\end{equation}

As usual let $C^{k}(\R)$ denote the spaces of $k$ times continuously differentiable functions on $\R$, and $C^{\infty}(\R)$ is the intersection of all these. When necessary we will consider derivatives to be taken in the sense of distributions.

We will mostly be working with spaces whose topology is induced by the quasi-norm mentioned in the introduction:
\begin{definition}
For a fixed sequence $\M=\seq{M}$ of numbers $M_k\geq1$ we define the quasi-norm
\begin{equation}\label{Mknorm}
\norm{f}=\sup_{k\geq0} \frac{\nl f^{(k)}\nr_p}{M_k}, \quad f\in C^{\infty}(\R).
\end{equation}
\end{definition}

Observe that if for two sequences $\M$ and $\mathcal{N}$ are such that $\widetilde{M}\leq\mathcal{N}$ entrywise, we will have the norm inequality
\begin{equation}\label{norm-ineq}
\nl u\nr_{\M}\leq \nl u\nr_{\mathcal{N}}.
\end{equation}

As mentioned, this quasi-norm will be used to define spaces of functions which when completed become quasi-Banach spaces. We will be working with test classes where the derivatives can be taken in the ordinary sense so that the quasi-norm \eqref{Mknorm} has a clear meaning, and their abstract completion with respect to the induced topology. The two classes that we will be concerned with are the following.

\begin{definition}
Let $\C$ and $\S$ be the classes of functions defined by
$$\C = \left\{f\in C^{\infty}(\R) : \norm{f}<\infty\right\} $$
and
$$\S = \left\{f\in C^\infty(\R) : \norm{f}<\infty \and \limsup_{k\to\infty} \nl f^{(k)}\nr_\infty^{\q^k} \leq 1 \right\},$$
respectively. Note that both classes are vector spaces and that they depend on the number $0<p<1$ and the choice of the sequence $\M$. We will denote the abstract completions of $\C$ and $\S$ by $\Ccomp$ and $\Scomp$, respectively.
\end{definition}

Again, if we have $\mathcal{N}\leq\M$ then the the class defined by the smaller sequence $\mathcal{N}$ is a subset of the corresponding class for the larger sequence, by them norm inequality \eqref{norm-ineq}. We observe also that $\S\subset \C$.

The condition $\limsup_{k\to\infty}\nl f^{(k)}\nr_{\infty}^{\q^{k}}\leq 1$ may seem strange, to understand where it comes from we refer to the proof of Proposition~\ref{sup-est} below. Note also that it is not very restrictive; we have $0<p<1$ so also $0<\q<1$, which means that $\q^k$ tends to zero exponentially quick.

The following is an easy consequence of the definitions, and provides the reason for thinking of $\Scomp$ and $\Ccomp$ as kinds of Sobolev spaces, explaining their relation to the $L^p$-spaces.

\begin{lemma}
\label{l:ssublp}
If $f\in\S$ or $f\in\C$ then $f^{(k)}\in L^p(\R)$ for any $k=0,1,\ldots$.
\end{lemma}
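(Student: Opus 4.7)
The plan is to observe that since $\S \subset \C$, it is enough to prove the lemma for $f \in \C$. For such an $f$, by the very definition of $\C$ we have $\norm{f} < \infty$, so unwinding the supremum in the definition \eqref{Mknorm} of the quasi-norm gives, for every $k \geq 0$,
$$
\nl f^{(k)} \nr_p \leq M_k \cdot \norm{f} < \infty.
$$
Since $f \in C^\infty(\R)$, each derivative $f^{(k)}$ is continuous and in particular measurable, so the finiteness of $\nl f^{(k)} \nr_p$ is exactly the statement that $f^{(k)} \in L^p(\R)$.

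There is essentially no obstacle here; the lemma is an immediate unpacking of the definitions of $\C$, $\S$, and $\norm{\cdot}$, together with the assumption $M_k \geq 1$ (which ensures the bound is finite). The content of the statement is primarily notational: it records that functions in the test classes have each individual derivative lying in $L^p(\R)$, which is what justifies thinking of $\Scomp$ and $\Ccomp$ as Sobolev-type spaces in the subsequent development.
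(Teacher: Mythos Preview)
Your proof is correct and essentially identical to the paper's: both simply observe that $\norm{f}<\infty$ and unwind the supremum to get $\nl f^{(k)}\nr_p \leq M_k\norm{f}<\infty$. One small quibble: the finiteness of the bound comes from $M_k$ being a finite real number and $\norm{f}<\infty$, not from the hypothesis $M_k\geq 1$.
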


\begin{proof}
If $f$ belongs to any one of these classes then $\norm{f}<\infty$ and therefore
\begin{equation*}
\nl f^{(k)}\nr_p =M_k \frac{\nl f^{(k)}\nr_{p}}{M_k} \leq M_k \sup_{l\geq0} \frac{\nl f^{(l)}\nr_p}{M_l} = M_k \norm{f} < \infty.\qedhere
\end{equation*}
\end{proof}

\section{Smooth embedding of $\Scomp$}
This section is devoted to the study of the completion $\Scomp$ of the class $\S$ with respect to the norm $\norm{\cdot}$, in the case when the sequence $\M$ does not grow too quickly. This is expressed as

\begin{equation}\label{mkbound}
\product{M}:=\prod_{k\geq 0}M_{k}^{p\q^{k-1}}<\infty.
\end{equation}
and will be assumed throughout this section. Our first main result is the following.

\begin{theorem}\label{completion-is-smooth}
Assume that the sequence $\M$ satisfies $\product{M}<\infty$. Then $\Scomp$ can be canonically and continuously embedded in $C^{\infty}(\R)$.
\end{theorem}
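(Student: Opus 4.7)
The strategy is to reduce the theorem to a pointwise Sobolev-type estimate
\[
\nl f^{(j)}\nr_\infty \leq C_j\norm{f}, \qquad f\in\S,\ j\geq 0,
\]
which I would state separately as a Proposition~\ref{sup-est}. Once such uniform bounds on every derivative are in hand, any $\norm{\cdot}$-Cauchy sequence in $\S$ is uniformly Cauchy together with each of its derivatives, so the classical theorem on differentiation of uniform limits ensures convergence in $C^\infty(\R)$. Sending the equivalence class to this limit yields the canonical embedding $\Scomp \hookrightarrow C^\infty(\R)$, with continuity coming from the $j=0$ case.

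The crux is thus the pointwise inequality. Its proof rests on the following elementary estimate: any smooth $g$ with $g\in L^p(\R)$ and $g'\in L^\infty(\R)$ must vanish at infinity (a short argument shows that $g'\in L^1$, giving $g$ limits at $\pm\infty$, which must be zero by $L^p$-integrability), and then
\[
\nl g\nr_\infty \leq \int_\R|g'(t)|\dd t = \int_\R|g'|^{p}|g'|^{1-p}\dd t \leq \nl g'\nr_\infty^{q}\nl g'\nr_p^{p},
\]
using the pointwise bound $|g'|^{1-p}\leq\nl g'\nr_\infty^{q}$ (valid because $q=1-p>0$). Iterating this with $g=f,f',f'',\ldots$ — each of which satisfies the vanishing hypothesis — yields, after $n$ steps, the telescoping inequality
\[
\nl f\nr_\infty \leq \nl f^{(n)}\nr_\infty^{q^{n}}\prod_{k=1}^n\nl f^{(k)}\nr_p^{pq^{k-1}}.
\]
At this point the role of the growth condition $\limsup_k\nl f^{(k)}\nr_\infty^{q^{k}}\leq 1$ defining $\S$ becomes transparent: it is tailor-made so that the tail factor $\nl f^{(n)}\nr_\infty^{q^{n}}$ has $\limsup_n\leq 1$. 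Letting $n\to\infty$, using $\nl f^{(k)}\nr_p\leq M_k\norm{f}$ from Lemma~\ref{l:ssublp} and the identity $\sum_{k\geq 1}pq^{k-1}=1$, one arrives at
\[
\nl f\nr_\infty \leq \norm{f}\prod_{k\geq 1}M_k^{pq^{k-1}},
\]
whose right-hand side is finite exactly because $\product{M}<\infty$. Running the same iteration starting from $f^{(j)}$ in place of $f$ — which is legitimate since $\limsup_k\nl f^{(j+k)}\nr_\infty^{q^{k}}=(\limsup_n\nl f^{(n)}\nr_\infty^{q^{n}})^{q^{-j}}\leq 1$ — gives the analogous bound for every $j\geq 0$.

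From these pointwise estimates the theorem follows by completion bookkeeping: a $\norm{\cdot}$-Cauchy sequence $(f_n)\subset\S$ has each derivative sequence $(f_n^{(k)})$ uniformly Cauchy, hence converging uniformly to some continuous $g_k$; the standard differentiation-of-limits theorem identifies $g_k=F^{(k)}$ with $F:=\lim f_n$, giving $F\in C^\infty(\R)$. The map $\iota\colon\Scomp\to C^\infty(\R)$ defined by $\iota([(f_n)]):=F$ is then well-defined, linear, continuous, and injective (the last by combining the $L^p$-convergence of each derivative sequence with the $\norm{\cdot}$-Cauchy property). The main obstacle is the pointwise estimate itself, and specifically the fact that the iterative scheme produces a finite bound only because of the precise coincidence between the ratio $q$ generated by the elementary inequality $\nl g\nr_\infty\leq\nl g'\nr_\infty^{q}\nl g'\nr_p^{p}$ and the exponent $q^{k}$ built into the definition of $\S$; absent this matching, the tail of the iteration could not be controlled.
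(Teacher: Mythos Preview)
Your proposal is correct and follows essentially the same route as the paper: the paper isolates the key estimate $\nl f\nr_\infty\leq \product{M}\norm{f}$ as Proposition~\ref{sup-est} (proved by exactly your iteration of $\nl g\nr_\infty\leq\nl g'\nr_\infty^{q}\nl g'\nr_p^{p}$), extends it to all derivatives in Corollary~\ref{derivative-sup-est}, and then deduces the embedding by the same uniform-Cauchy argument you outline. The only cosmetic differences are that the paper normalizes to $\norm{f}=1$ rather than tracking the exponent sum $\sum_{k\ge1}pq^{k-1}=1$, and it justifies the base inequality by noting that an $L^p$ function takes arbitrarily small values rather than via your (equivalent) observation that $g'\in L^p\cap L^\infty\subset L^1$ forces $g$ to vanish at infinity.
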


We will present the proof of this claim towards the end of this section. First we proceed with a preliminary result and a simple corollary thereof. The proof of the below proposition is interesting in its own right since it explains where the conditions \eqref{mkbound} and $\limsup \nl f^{(k)}\nr_{\infty}^{\q^k}$ comes from. We remark that this result is due to H{\aa}kan Hedenmalm. Since it has not been published, we present his result here with a proof.

\begin{proposition}[Hedenmalm, \cite{private}]
\label{sup-est}
If $f\in \S$ then
$$\nl f\nr_\infty \leq \product{M} \norm{f}.$$
\end{proposition}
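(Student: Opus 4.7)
The strategy is to derive, by iterating a one-step estimate, the chain of inequalities
$$
\nl f\nr_\infty \leq \nl f^{(N)}\nr_\infty^{\q^N}\prod_{k=1}^N \nl f^{(k)}\nr_p^{p\q^{k-1}}
$$
for each $N \geq 1$, and then let $N \to \infty$, using the growth condition defining $\S$ to control the $\nl f^{(N)}\nr_\infty^{\q^N}$ factor and the hypothesis $\product{M} < \infty$ to control the product of $L^p$-norms.

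The core of the argument is the following one-step estimate: for any $g \in C^{\infty}(\R)$ with $g \in L^p(\R)$ and $g' \in L^{\infty}(\R)$,
$$
\nl g\nr_\infty \leq \nl g'\nr_\infty^{\q}\,\nl g'\nr_p^p.
$$
To prove this I would first observe that a continuous $L^p$-function must satisfy $\liminf_{|x|\to\infty}|g(x)| = 0$ on at least one side of the real line (otherwise $|g|^p$ would be bounded below by a positive constant on a set of infinite measure, contradicting $g \in L^p$). Picking a sequence $y_n$ along which $g(y_n) \to 0$, the fundamental theorem of calculus yields $|g(x)| \leq |g(y_n)| + \int_{y_n}^x |g'|\,du$, and letting $n \to \infty$ gives $\nl g\nr_\infty \leq \int_{\R}|g'|\,du$. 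The factorization $|g'| = |g'|^p |g'|^{1-p} \leq \nl g'\nr_\infty^{\q}\,|g'|^p$ then concludes the one-step bound.

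Applying this estimate with $g = f^{(k)}$ is legitimate because each $f^{(k)}$ lies in $L^p$ by Lemma~\ref{l:ssublp} and is bounded in view of the growth condition $\limsup_k \nl f^{(k)}\nr_\infty^{\q^k}\leq 1$. Raising the inequality for index $k+1$ to the power $\q^k$ and multiplying telescopes into the claimed $N$-step bound by induction on $N$. To pass to the limit $N \to \infty$, I would use $\nl f^{(k)}\nr_p \leq M_k \norm{f}$ and the identity $\sum_{k \geq 1}p\q^{k-1} = p/(1-\q) = 1$, which makes the $L^p$-factor collapse to $\norm{f}\prod_{k \geq 1}M_k^{p\q^{k-1}}$, while $\limsup_N \nl f^{(N)}\nr_\infty^{\q^N} \leq 1$ is precisely the defining condition of $\S$. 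Combining the two gives the proposition.

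The main subtle points I expect are: (i) the base-case limit passage in the fundamental-theorem step, where the $L^p$-tail-vanishing property is indispensable and is really the only place where $g \in L^p$ is used; and (ii) recognizing that the factorization $|g'| = |g'|^p |g'|^{\q}$ is exactly what produces the geometric-series exponents $\q^N, p\q^{N-1}, \ldots, p\q, p$, thereby explaining why $\product{M}<\infty$ is the natural threshold for the conclusion.
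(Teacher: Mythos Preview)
Your proposal is correct and takes essentially the same approach as the paper: derive the one-step bound $\nl g\nr_\infty \leq \nl g'\nr_\infty^{\q}\nl g'\nr_p^p$ from the fundamental theorem of calculus together with the fact that a continuous $L^p$-function attains arbitrarily small values, iterate it $N$ times, and let $N\to\infty$ using the growth condition defining $\S$. The only cosmetic difference is that the paper normalizes to $\norm{f}=1$ at the outset, whereas you carry $\norm{f}$ through and invoke $\sum_{k\geq1}p\q^{k-1}=1$ at the end; the two are equivalent.
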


This was inspired by work done in \cite{HedenmalmBorichev} where it was used to establish what is termed {\em Hardy-Littlewood ellipticity} of the $N$-Laplacian, originally studied by Hardy and Littlewood in \cite{Hardy1932}. This is a rather remarkable feature --- for a harmonic function $u$ and $p\geq1$ it is natural to expect that $\lvert u(z_0)\rvert$ can be controlled by the $p$-norm of $u$, i.e. that
$$
\lvert u(z_0)\rvert \leq \frac{1}{\pi}\int_{\mathbb{D}(z_0,1)}\lvert u\rvert^{p}\dd A(z).
$$
This is due to subharmonicity of $\lvert u\rvert^{p}$. When $0<p<1$ the function $z\mapsto \lvert u(z)\rvert^{p}$ is no longer subharmonic, but the bound of $\lvert u(z_0)\rvert$ in terms of an area integral survives
nevertheless, although with a different constant. The proof is loosely based on similar ideas.

\ifsmall\else
\newpage 
\fi

\begin{proof}
We assume that $\norm{f}=1$.
Since $f\in C^\infty(\R)$ we can write
$$f(x)-f(y) = \int_y^x f' \dd t$$
and use this to estimate
$$|f(x)-f(y)|\leq \int_y^x |f'| \dd t = \int_y^x |f'|^p |f'|^{1-p}\dd t \leq \nl f'\nr_\infty^{1-p} \nl f'\nr_p^p.$$
By Lemma~\ref{l:ssublp} we have $f\in L^p(\R)$. Thus for some value of $y$, $f(y)$ will be arbitrarily small.
Therefore if we apply the reverse triangle inequality and take the supremum over $x$ we get
$$\nl f\nr_\infty \leq \nl f'\nr_\infty^{\q} \nl f'\nr_p^p.$$
Repeating this estimate for $f'$ instead of $f$ and using it in the previous estimate we find that
$$\nl f\nr_\infty \leq \nl f''\nr_\infty^{\q^2} \nl f''\nr_p^{p\q}  \nl f'\nr_p^p.$$
Iterating this we end up with
$$\nl f\nr_\infty \leq \nl f^{(n)}\nr_\infty^{\q^n} \times\prod_{k=1}^n \nl f^{(k)}\nr_p^{p\q^{k-1}} .$$
Now by Lemma \ref{l:ssublp} and our assumption $\norm{f}=1$
$$\nl f^{(k)}\nr_p^{p\q^{k-1}} \leq M_k^{p\q^{k-1}},$$
and we arrive at
$$
\nl f\nr_\infty \leq \nl f^{(n)}\nr_\infty^{\q^n}\times\prod_{k=1}^n M_k^{p\q^{k-1}}.
$$
Since $\limsup \nl f^{(n)}\nr_{\infty}^{\q^{n}}\leq 1$ we can let $n$ tend to $\infty$ to obtain
\begin{equation*}
\nl f\nr_\infty \leq \prod_{k=1}^\infty M_k^{p\q^{k-1}}=\product{M}.\qedhere
\end{equation*}
\end{proof}

The following corollary is a simple extension of this technique.

\begin{corollary}\label{derivative-sup-est}
If $f\in\S$ then
$$
\nl f^{(i)}\nr_{\infty}\leq \product{M}^{1/\q^{i}}\norm{f}, \quad i=0,1,\ldots.
$$
\end{corollary}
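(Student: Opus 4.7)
The plan is to imitate the iterative argument from the proof of Proposition \ref{sup-est} verbatim, but starting from the derivative $f^{(i)}$ rather than from $f$. Since $f\in \S$, Lemma \ref{l:ssublp} tells us that every $f^{(j)}$ lies in $L^p(\R)$, so the same chain of pointwise and norm estimates used there applies with $f^{(i)}$ in place of $f$. After $n$ steps of iteration one arrives at
$$\nl f^{(i)}\nr_\infty \leq \nl f^{(i+n)}\nr_\infty^{\q^n}\prod_{k=1}^n \nl f^{(i+k)}\nr_p^{p\q^{k-1}}.$$

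The first task is to dispose of the boundary factor as $n\to\infty$. Writing $\q^n=\q^{i+n}/\q^i$, this factor equals $\bigl(\nl f^{(i+n)}\nr_\infty^{\q^{i+n}}\bigr)^{1/\q^i}$, and since $f\in\S$ satisfies $\limsup \nl f^{(i+n)}\nr_\infty^{\q^{i+n}}\leq 1$, continuity of $x\mapsto x^{1/\q^i}$ on $[0,\infty)$ forces the limsup of the boundary factor to be $\leq 1$ as well. For the remaining product I would use the defining bound $\nl f^{(i+k)}\nr_p \leq M_{i+k}\norm{f}$, so that in the limit
$$\nl f^{(i)}\nr_\infty \leq \norm{f}^{\sum_{k\geq 1} p\q^{k-1}}\prod_{k=1}^\infty M_{i+k}^{p\q^{k-1}} = \norm{f}\prod_{k=1}^\infty M_{i+k}^{p\q^{k-1}},$$
using that the geometric series $\sum_{k\geq 1}p\q^{k-1}$ equals $1$.

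It then remains to recast the product as $\product{M}^{1/\q^i}$. Reindexing via $j=i+k$ and rewriting $\q^{k-1}=\q^{j-1}/\q^i$ turns the product into $\bigl(\prod_{j=i+1}^\infty M_j^{p\q^{j-1}}\bigr)^{1/\q^i}$, and since $M_j\geq 1$ the truncated product is bounded by the full product $\product{M}$; raising to the power $1/\q^i\geq 1$ preserves the inequality because $\product{M}\geq 1$. The only real subtlety is tracking the limsup in the first step; the remainder is indexing bookkeeping combined with the normalization $M_j\geq 1$ to absorb the missing initial factors.
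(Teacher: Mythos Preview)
Your proof is correct and follows essentially the same route as the paper's: both iterate the basic inequality starting from $f^{(i)}$, reindex the resulting product via $j=i+k$ and use $M_j\geq 1$ to bound it by $\product{M}^{1/\q^{i}}$, and handle the boundary factor by rewriting $\q^n=\q^{i+n}/\q^i$. The only cosmetic difference is that the paper normalizes $\norm{f}=1$ at the outset, whereas you track the factor $\norm{f}^{\sum_{k\geq1}p\q^{k-1}}=\norm{f}$ explicitly.
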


\begin{proof}
Again, assume that $\norm{f}=1$ and use the same argument as in Proposition \ref{sup-est} but starting with $f^{(i)}$ instead of $f$.
Hence we can get the estimate
$$\nl f^{(i)}\nr_\infty \leq \nl f^{(i+1)}\nr_\infty^{\q} \nl f^{(i+1)}\nr_p^p.$$
As before we iterate this estimate and using that $\norm{f}=1$ we have
$$\nl f^{(k+i)}\nr_p^{p\q^{k-1}} \leq M_{i+k}^{p\q^{k-1}}.$$
Hence
$$\nl f^{(i)}\nr_\infty \leq \nl f^{(n+i)}\nr_\infty^{\q^n} \times \prod_{k=1}^n M_{i+k}^{p\q^{k-1}}\leq \nl f^{(n+i)}\nr_\infty^{\q^n}\product{M}^{1/\q^i}$$
where in the last inequality we use
$$
\prod_{k=1}^n M_{i+k}^{p\q^{k-1}} = \left(\prod_{k=1}^n M_{i+k}^{p\q^{i+k-1}}\right)^{1/\q^i}\!\!\!\!\leq\left(\prod_{l=1}^\infty M_l^{p\q^{l-1}}\right)^{1/\q^i} \!\!\!\! =\product{M}^{1/\q^i}.
$$
A similar trick gives
$$\limsup_{n\to\infty} \nl f^{(n+i)}\nr_\infty^{\q^n} \leq 1,$$
and as in the previous proposition, we let $n\to\infty$ to obtain the desired estimate.
\end{proof}

We proceed with the proof of the main theorem of this section using this corollary.

\begin{proof}[Proof of Theorem \ref{completion-is-smooth}]
Assume that $f\in \Scomp$. Then it can be represented by a Cauchy sequence $\{f_{n}\}$ in $\S$ and by Lemma \ref{sup-est} we have
$$
\nl f_{n}-f_m\nr_{\infty}\leq \product{M} \norm{f_n-f_m}\to 0,
$$
so $\{f_n\}$ is Cauchy in supremum norm as well, which implies that $f_n\to g$ for a (unique) continuous $g$.

Due to Corollary \ref{derivative-sup-est} the similar estimate
\begin{equation}
\nl f_{n}^{(i)}-f_m^{(i)}\nr_{\infty}\leq \product{M}^{1/\q^{i}} \norm{f_n-f_m}
\end{equation}
will hold for any derivative $f^{(i)}, i=0,1,\ldots$ of $f$.
Therefore there will exist (unique) functions $g_i\in C(\R)$ such that $f_n^{(i)}\to g_i$ in supremum norm for each $i\geq1$. It is clear that $g_i=g^{(i)}$.
Thus $g\in C^{\infty}(\R)$ is the limit of the sequence $\{f_n\}$; a representative of $f\in\Scomp$.
We thus define a mapping by $f\mapsto g$ under these circumstances, and injectivity, linearity and continuity for this embedding are all readily verified.
\end{proof}

\section{Converse results for $\Scomp$ and $\Ccomp$}
In this section we shall next try to understand to what extent Theorem~\ref{completion-is-smooth} is sharp. We first ask what happens if we drop the condition
\begin{equation*}\label{mu}
\product{M}=\prod_{k=0}^{\infty}M_{k}^{p\q^{k-1}}<\infty
\end{equation*}
which together with the supremum-norm growth condition
\begin{equation}\label{liminf}
\limsup_{n\to\infty}\nl f^{(n)}\nr_{\infty}^{\q^{n}}\leq 1
\end{equation}
gave such nice results for $\Scomp$. Will we still end up in $C^{\infty}(\R)$, and if not --- will there be some kind of phase transition as the product $\product{M}$ becomes infinite? We make contrast between the situation in Theorem \ref{completion-is-smooth}, where we have an inequality
$$
\nl f\nr_{\infty}\leq C\norm{f}, \quad f\in \S
$$
(and a similar one for the derivatives of $f$) and thus are able to define a continuous natural embedding $\Scomp\hookrightarrow C^{\infty}(\R)$, and the situation where such an inequality is impossible. Note that if this inequality fails, it impossible to embed $\Scomp$ even into $C(\R)$ endowed with supremum norm.

Towards the end of the section, we investigate what happens if one instead drops condition \eqref{liminf} and thus considers the completion of the class $\C$.
\subsection{Construction of smooth functions by infinite convolutions}

Building on a discussion by H\"ormander in \cite{HORM1}, where he constructs so-called mollifiers using decreasing sequences $\seq{a}\in \ell^{1}(\mathbb{N})$ of positive numbers, we set for each $n\geq0$
$$
H_{a_n}(x)=\frac{1}{a_n}\chi_{[0,a_n]}(x),\quad x\in\R.
$$
We then define a sequence of functions
\begin{equation}\label{unconstruct}
u_n=H_{a_0}*H_{a_1}*\ldots *H_{a_n},\quad n\geq0,
\end{equation}
and note that $u_n$ has support on $[0, a_0+\ldots a_n]$. Our intention is to let $n$ tend to infinity to obtain a smooth function which hopefully will be a member of one of our classes $\S$ or $\C$. These limit functions will not, however, automatically lend themselves to any nice computations, as is the case with finite $n$. We recall that $u_n\in C^{n}(\R)$ and that $u_n\to u$ uniformly on $\R$ for some $u\in C^{\infty}(\R)$, supported on $[0,\sum_j a_j]$ which is a compact set since $(a_k)_{k=0}^{\infty}\in\ell^{1}$. Our goal is to provide conditions so that expressions involving $u$ can be compared nicely to corresponding ones for $u_{n}$ instead.

We remark that these functions are usually referred to as {\em spline functions}, which are piecewise polynomials with finitely many break points. This topic is, however, outside the field of expertise of the authors, and for a treatment of such functions and their role in constructive approximation theory see for example the text books \cite{DeVore} and \cite{Ahlberg}. 

To get some feel for these functions $u_n$, consider $u_1(x)=H_{a_0}*H_{a_1}(x)$, where $a_1<a_0$. By direct computation we find that it is a tent-like function with a plateau:
$$
u_1(x)=\begin{cases}  \frac{x}{a_0a_1} & 0\leq x \leq a_1\\ \frac{1}{a_0} & a_1 < x \leq a_0 \\ \frac{a_0+a_1-x}{a_0a_1}& a_0 < x \leq a_0+a_1\end{cases}
$$
The function $u_1$ is illustrated in the middle column of Figure~\ref{unk}. A differentiation of the above yields
$$
u_1'(x)=\begin{cases}\frac{1}{a_0a_1} & 0\leq x\leq a_1\\ 0 & a_1< x\leq a_0\\ -\frac{1}{a_0a_1}& a_0< x\leq a_0+a_1\end{cases}
$$
almost everywhere. Thus $u'_{1}$ has a very similar shape as $u_0:=H_{a_0}$, see the diagonal plots in Figure~\ref{unk}. To construct $u_2$ we convolve again by $H_{a_2}$. If we want to keep the symmetry, i.e. if we want $u'_{2}$ to have a similar as $u_1$ and  $u''_2$ to resemble $u'_1$ (i.e., no overlap in the top derivatives $u_n^{(n)}$), we will need to require $a_2+a_1<a_0$ and $a_2<a_1$.

\begin{figure}[t]
    \centering
    \includegraphics[width=0.8\textwidth]{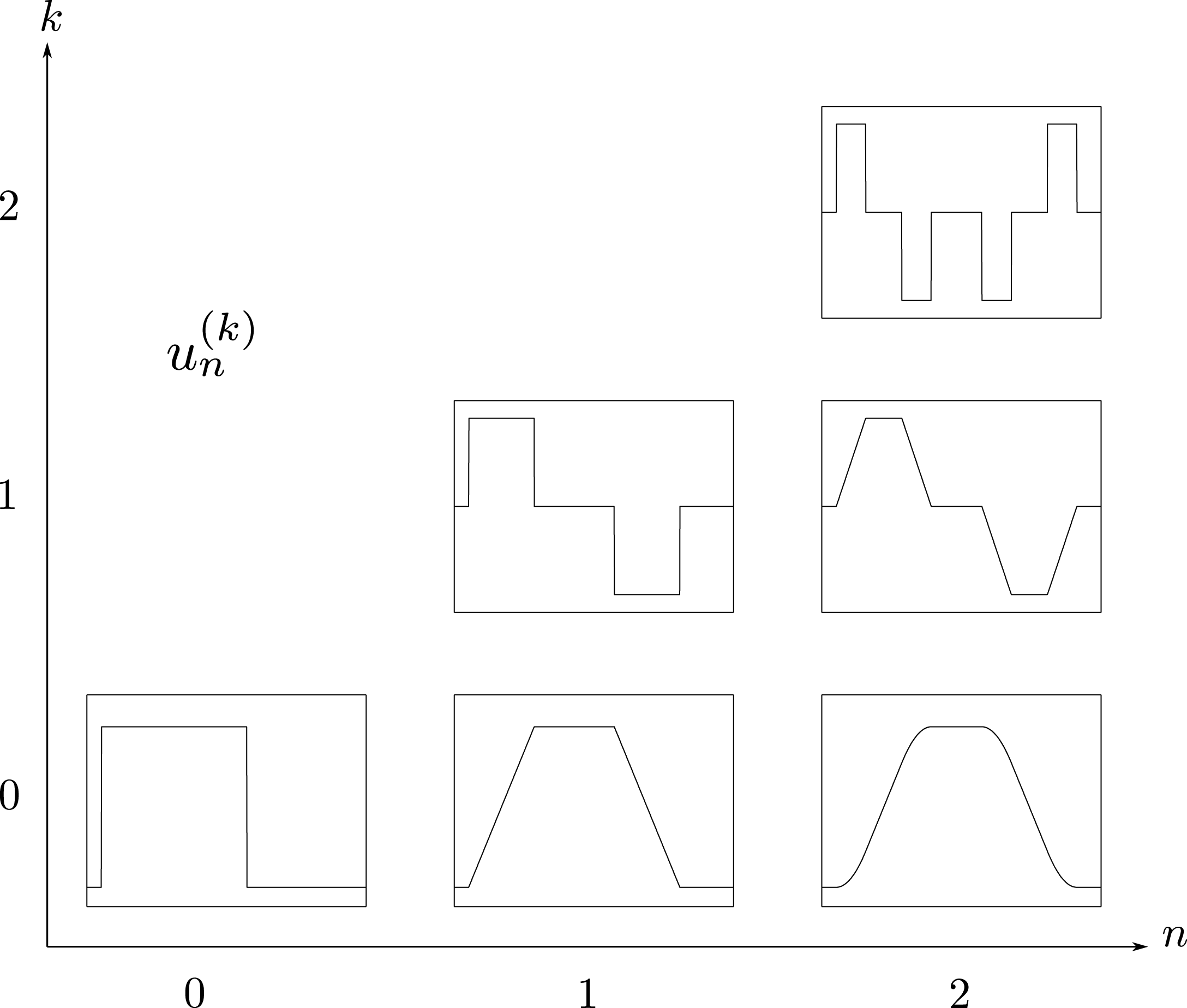}
    \caption{The appearance of $u_{n}^{(k)}$}
    \label{unk}
\end{figure}

It turns out that the condition
\begin{equation}\label{separation}
\sum_{j>k}a_j\leq(1-c)a_k,\quad k\geq0
\end{equation}
for some $c\in(0,1)$ is the appropriate generalization of the requirement $a_1<a_0$, when $k>1$. If we want the different translates of the characteristic functions that makes up the top derivatives $u_{k}^{(k)}$ to be disjoint, this is what does the trick. The results alluded to above are captured by the following lemma.

\begin{lemma}\label{convol-basic}
Let $a=(a_n)_{n\geq0}\in\ell^{1}$ denote a decreasing sequence of positive numbers that satisfies \eqref{separation}. Then $u_{\:n}^{(n)}$ is supported on $2^n$ disjoint intervals $I_j$, \mbox{$j=1,2,\ldots, 2^n$} of length $a_n$ and on each such interval we have
$$
\lvert u_{\:n}^{(n)}(x)\rvert =\frac{1}{a_0a_1\cdot \ldots \cdot a_n}, \quad x\in I_j.
$$
Moreover, the leftmost interval $I_1$ is $[0,a_n]$, and the others are obtained by different translations.
\end{lemma}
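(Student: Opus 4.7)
The plan is to prove the lemma by induction on $n$. The base case $n=0$ is immediate: $u_0=H_{a_0}=a_0^{-1}\chi_{[0,a_0]}$ is supported on the single interval $[0,a_0]$ of length $a_0$, with constant absolute value $1/a_0$.

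For the inductive step I would work from the distributional identity
\[
u_n^{(n)} \;=\; u_{n-1}^{(n-1)} * H_{a_n}' \;=\; \frac{1}{a_n}\!\left(u_{n-1}^{(n-1)}(\cdot)-u_{n-1}^{(n-1)}(\cdot-a_n)\right),
\]
which follows from $H_{a_n}'=a_n^{-1}(\delta_0-\delta_{a_n})$ together with the freedom to distribute derivatives among convolution factors. By the inductive hypothesis, $u_{n-1}^{(n-1)}$ is piecewise constant of modulus $(a_0\cdots a_{n-1})^{-1}$ on $2^{n-1}$ disjoint intervals of length $a_{n-1}$. On a single such parent $[\alpha,\alpha+a_{n-1}]$ carrying value $v$, the shifted copy $u_{n-1}^{(n-1)}(\cdot-a_n)$ exactly cancels the unshifted copy on $[\alpha+a_n,\alpha+a_{n-1}]$ (using $a_n<a_{n-1}$), leaving value $v/a_n$ on $[\alpha,\alpha+a_n]$ and $-v/a_n$ on $[\alpha+a_{n-1},\alpha+a_{n-1}+a_n]$. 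Each parent thus splits into exactly two children of length $a_n$ with the desired amplitude $(a_0\cdots a_n)^{-1}$, and tracking the left endpoints shows that at level $n$ they form the set $\{\sigma_S:S\subseteq\{0,\ldots,n-1\}\}$, where $\sigma_S:=\sum_{k\in S}a_k$. In particular $\sigma_\emptyset=0$ gives the leftmost interval $[0,a_n]$, and all others are translates by the various nonzero $\sigma_S$.

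The main obstacle is verifying that the $2^n$ child intervals are pairwise disjoint, and that the bookkeeping above is not spoiled by cross-interaction between one parent's shifted copy and another parent's unshifted copy. Here the separation hypothesis \eqref{separation} is exactly what is needed. For $S\neq T$ at level $n$, writing $m:=\min(S\triangle T)$ and assuming without loss of generality $m\in S\setminus T$, I would estimate
\[
\sigma_S-\sigma_T \;=\; a_m + \!\sum_{\substack{k>m\\k\in S\setminus T}}\! a_k - \!\sum_{\substack{k>m\\k\in T\setminus S}}\! a_k \;\geq\; a_m - \!\sum_{j>m}\! a_j \;\geq\; c\,a_m.
\]
Combined with monotonicity $a_m\ge a_{n-1}$ and the consequence $a_n\le(1-c)a_{n-1}$ of \eqref{separation}, this forces the spacing between distinct level-$n$ endpoints to exceed $a_n$ (the natural regime being $c\ge 1/2$); a parallel use of \eqref{separation} at $k=n-2$ gives $a_{n-1}+a_n\le(1-c)a_{n-2}$, which bounds the spacing between parent intervals below by $a_{n-1}+a_n$ and thereby rules out the cross-interaction concern. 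With disjointness established at both levels, the inductive step is complete.
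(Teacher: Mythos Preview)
Your inductive scheme is different from the paper's: you put the derivative on the \emph{last} factor, writing
\[
u_n^{(n)}=\frac{1}{a_n}\bigl(u_{n-1}^{(n-1)}-\tau_{a_n}u_{n-1}^{(n-1)}\bigr),
\]
whereas the paper differentiates the \emph{first} factor and applies the induction hypothesis to the shifted sequence $b_k=a_{k+1}$, obtaining
\[
u_{n+1}^{(n+1)}=\frac{1}{a_0}\bigl(\tilde u_n^{(n)}-\tau_{a_0}\tilde u_n^{(n)}\bigr),\qquad \tilde u_n=H_{a_1}*\cdots*H_{a_{n+1}}.
\]
The advantage of the paper's decomposition is that disjointness is a one-line observation: $\tilde u_n^{(n)}$ is supported in $[0,a_1+\cdots+a_{n+1}]\subset[0,a_0]$ by \eqref{separation}, and $\tau_{a_0}$ moves this block entirely to the right of $a_0$; this works for every $c\in(0,1)$ without any further bookkeeping. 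Your decomposition instead shifts by the small amount $a_n$, so disjointness genuinely requires a combinatorial argument on the endpoints $\sigma_S$.

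That argument, as you wrote it, does not cover the full range $c\in(0,1)$. From $\sigma_S-\sigma_T\ge c\,a_m$ and $a_n\le(1-c)a_{n-1}$ you only conclude $\sigma_S-\sigma_T\ge a_n$ when $c\ge 1-c$, i.e.\ $c\ge 1/2$, which you yourself flag. The lemma, however, is stated for any $c\in(0,1)$, so this is a gap. The fix is simple: in your lower bound you discarded too much. Since $S,T\subset\{0,\dots,n-1\}$, the negative contribution is only $\sum_{m<k\le n-1}a_k$, not the full tail $\sum_{k>m}a_k$. Hence
\[
\sigma_S-\sigma_T\;\ge\;a_m-\sum_{m<k\le n-1}a_k\;=\;a_m-\sum_{k>m}a_k+\sum_{k\ge n}a_k\;\ge\;c\,a_m+a_n\;>\;a_n,
\]
valid for every $c\in(0,1)$. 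The same refinement (now with $m\le n-2$, giving $\sigma_T-\sigma_S\ge c\,a_m+a_{n-1}+a_n$) handles your cross-interaction concern without any restriction on $c$. With this correction your route is complete and gives, as a bonus, the explicit description of the left endpoints as the subset sums $\sigma_S=\sum_{k\in S}a_k$, $S\subset\{0,\dots,n-1\}$, which the paper does not record.
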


\begin{proof}
We proceed by induction. The lemma is trivially true for $n=0$. Suppose the lemma is true for some $n\geq1$. That is, whenever a sequence $a$ satisfies the hypotheses, the resulting function $u_{\:n}^{(n)}$ will have the desired properties. Consider such a sequence $a=(a_j)_{j=0}^{\infty}$ and the corresponding $u_{\:n+1}^{(n+1)}$. It holds that
$$
u_{\:n+1}^{(n+1)}(x)=\frac{d}{dx}H_{a_0}*\frac{d^n}{dx^n}H_{a_1}*\ldots *H_{a_{n+1}}(x)=\frac{1}{a_0}(1-\tau_{a_0})\tilde{u}_{\:n}^{(n)}(x),
$$
where $\tilde{u}_n=H_{a_1}*\ldots* H_{a_n}$ and where $\tau_{a}$ denotes translation by $a$. Now we want to apply the induction hypothesis to $\tilde{u}_n$. To be able to do this we need to verify that the properties of $\seq{a}$ are inherited by the shifted sequence $\seq{b}$, where $b_k=a_{k+1}$. This is no problem however, since $\seq{b}$ is clearly a decreasing sequence of positive numbers, and
$$
\sum_{j>k}b_j=\sum_{j>k}a_j+1 \leq (1-c)a_{k+1}=(1-c)b_k, \quad k\geq0.
$$
Thus we can infer that $\tilde{u}_n^{(n)}$ has the desired properties, so we can write
$$
\tilde{u}_{\:n}^{(n)}(x)=\frac{1}{a_1a_2\cdots a_{n+1}}\sum_{j=1}^{2^n}\pm \chi_{I_j'}(x),\quad x\in\R.
$$
To calculate $u^{(n+1)}_{n+1}$ we want to subtract a translated copy of $\tilde{u}^{(n)}_{n}$ to itself. The supports of these copies do not overlap since $\tilde{u}^{(n)}_{n}$ is supported on 
$$[0,a_1+a_2+\ldots+a_{n+1}]\subset [0,a_0].$$ 
From this it follows that the lemma holds for $n+1$ and the proof is complete.
\end{proof}

We can do even better. Using the previous ideas we can prove the following proposition, which permits us to move calculations from $u^{(n)}$ to $u^{(n)}_{n}$ when calculating $L^{p}$-norms.
\begin{proposition}\label{convol-est} Let $\seq{a}$ denote a sequence satisfying \eqref{separation}. Then
$$
\nl u^{(n)}\nr_{\infty} = \nl u_{n}^{(n)}\nr_{\infty}=\frac{1}{a_0a_1\cdots a_n}, \quad n\geq0,
$$
and we have the estimate
$$
\nl u^{(n)}(x)\nr_{p}\leq (2-c)^{1/p}\nl u_{\:n}^{(n)}\nr_{p}=(2-c)^{1/p}\frac{2^{n/p}a_n^{1/p}}{a_0\cdot \ldots \cdot a_n},\quad n\geq0.
$$
\end{proposition}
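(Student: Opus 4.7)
The plan is to realize $u^{(n)}$ as a convolution of $u_{\:n}^{(n)}$ against a probability kernel of short support, so that every quantitative statement about $u^{(n)}$ can be read off from the explicit form of $u_{\:n}^{(n)}$ provided by Lemma~\ref{convol-basic}. Concretely, for each $m>n$ I write
$$u_m = u_n * H_{a_{n+1}} * \cdots * H_{a_m}$$
and push the $n$ derivatives onto $u_n$ to obtain $u_{\:m}^{(n)} = u_{\:n}^{(n)} * w_{n,m}$, where $w_{n,m}:= H_{a_{n+1}} * \cdots * H_{a_m}$ is a probability density supported in $[0,a_{n+1}+\cdots+a_m]\subset[0,L_n]$ with $L_n:=\sum_{j>n}a_j \leq (1-c)a_n$ by \eqref{separation}. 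A uniform-convergence argument of the same type used to construct $u$ itself (and justified by $(a_k)\in\ell^1$) shows that $w_{n,m}$ converges uniformly as $m\to\infty$ to a probability density $v_n$ supported in $[0,L_n]$, so $u_{\:m}^{(n)} \to u_{\:n}^{(n)} * v_n$ uniformly. Combined with $u_m\to u$ this yields the key identity
$$u^{(n)} = u_{\:n}^{(n)} * v_n.$$

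From this identity the $L^\infty$ claim follows quickly. Lemma~\ref{convol-basic} says that $u_{\:n}^{(n)} = \pm(a_0\cdots a_n)^{-1}$ on $2^n$ disjoint intervals $I_j$ of length $a_n$ and vanishes elsewhere. Since $v_n$ is a probability density, $|u^{(n)}|\leq \|u_{\:n}^{(n)}\|_\infty = (a_0\cdots a_n)^{-1}$ pointwise. Equality is attained at any $x$ for which the reflected support $x-[0,L_n]$ fits inside a single $I_j$, and such $x$ exist precisely because $L_n\leq (1-c)a_n<a_n$, leaving a subinterval of admissible centers of length $a_n-L_n>0$ inside each $I_j$.

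For the $L^p$ estimate I combine the pointwise bound with a support estimate. The convolution $u_{\:n}^{(n)}*v_n$ is supported in $\bigcup_j (I_j + [0,L_n])$, a union of $2^n$ intervals of total length at most $2^n(a_n+L_n)\leq 2^n(2-c)a_n$. Integrating $|u^{(n)}|^p$ over this set gives
$$\nl u^{(n)}\nr_p^p \leq \left(\frac{1}{a_0\cdots a_n}\right)^p \cdot 2^n(2-c)a_n = (2-c)\,\nl u_{\:n}^{(n)}\nr_p^p,$$
where the last equality uses the exact value $\|u_{\:n}^{(n)}\|_p^p = (a_0\cdots a_n)^{-p}\cdot 2^n a_n$, itself read off directly from Lemma~\ref{convol-basic}; taking $p$-th roots yields the stated inequality.

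The only technical point that requires care is the rigorous justification of the identity $u^{(n)} = u_{\:n}^{(n)}*v_n$ in the classical sense. This reduces to showing that the tail convolutions $w_{n,m}$ are uniformly Cauchy in $m$, which is just the tail-analogue of the argument already used to conclude $u\in C^\infty(\R)$ in the construction preceding the proposition, so no new idea is needed.
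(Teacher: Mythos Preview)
Your proof is correct and follows essentially the same route as the paper: both exploit that the tail convolution $H_{a_{n+1}}*\cdots$ is a probability kernel supported in an interval of length $\sum_{j>n}a_j\leq(1-c)a_n$, so it neither raises the supremum of $u_{\:n}^{(n)}$ nor enlarges its support by more than $(1-c)a_n$. The only difference is packaging---you pass to the limiting kernel $v_n$ first and read everything off the identity $u^{(n)}=u_{\:n}^{(n)}*v_n$, whereas the paper works at the finite stage $u_{\:n+k}^{(n)}$ (with a short induction on $k$ for the lower bound in the sup-norm equality) and then lets $k\to\infty$.
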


\begin{proof}
For the first assertion, just observe that for any $k\geq1$
$$
\nl u_{\:n+k}^{(n)}\nr_{\infty}=\nl u_{n}^{(n)}*H_{n+1}*\ldots*H_{n+k}\nr_{\infty}\leq \nl u_{\:n}^{(n)}\nr_{\infty}
$$
since $\int H_{n+1}*\ldots*H_{n+k}=1$. Now since $u^{(n)}=\lim_k u_{\:n+k}^{(n)}$ it follows that we have the inequality \mbox{$\nl u^{(n)}\nr_{\infty}\leq \nl u_{\:n}^{(n)}\nr_{\infty}$}. To prove the reverse inequality, we shall use induction on $k$ to find an $x$ such that $u_n^{(n)}(x)=u_{n+1}^{(n)}(x)$. Set 
$$
J_k=[a_{n+1}+\ldots+a_{n+k},a_n].
$$
Actually we shall prove that for any $x\in J_k$ we have
\begin{equation}\label{indass}
u_{\:n+k}^{(n)}(x)=u_{\:n}^{(n)}(x)=\frac{1}{a_{0}a_{1}\cdots a_{n}},
\end{equation}
where the last equality holds by Lemma~\ref{convol-basic} since $J_k\subset[0,a_n]=I_1$. This can be seen in Figure~\ref{overlay2} at the right half of the rectangular bump: where all three functions coincide. 
\begin{figure}[t]
    \centering
    \includegraphics[width=0.8\textwidth]{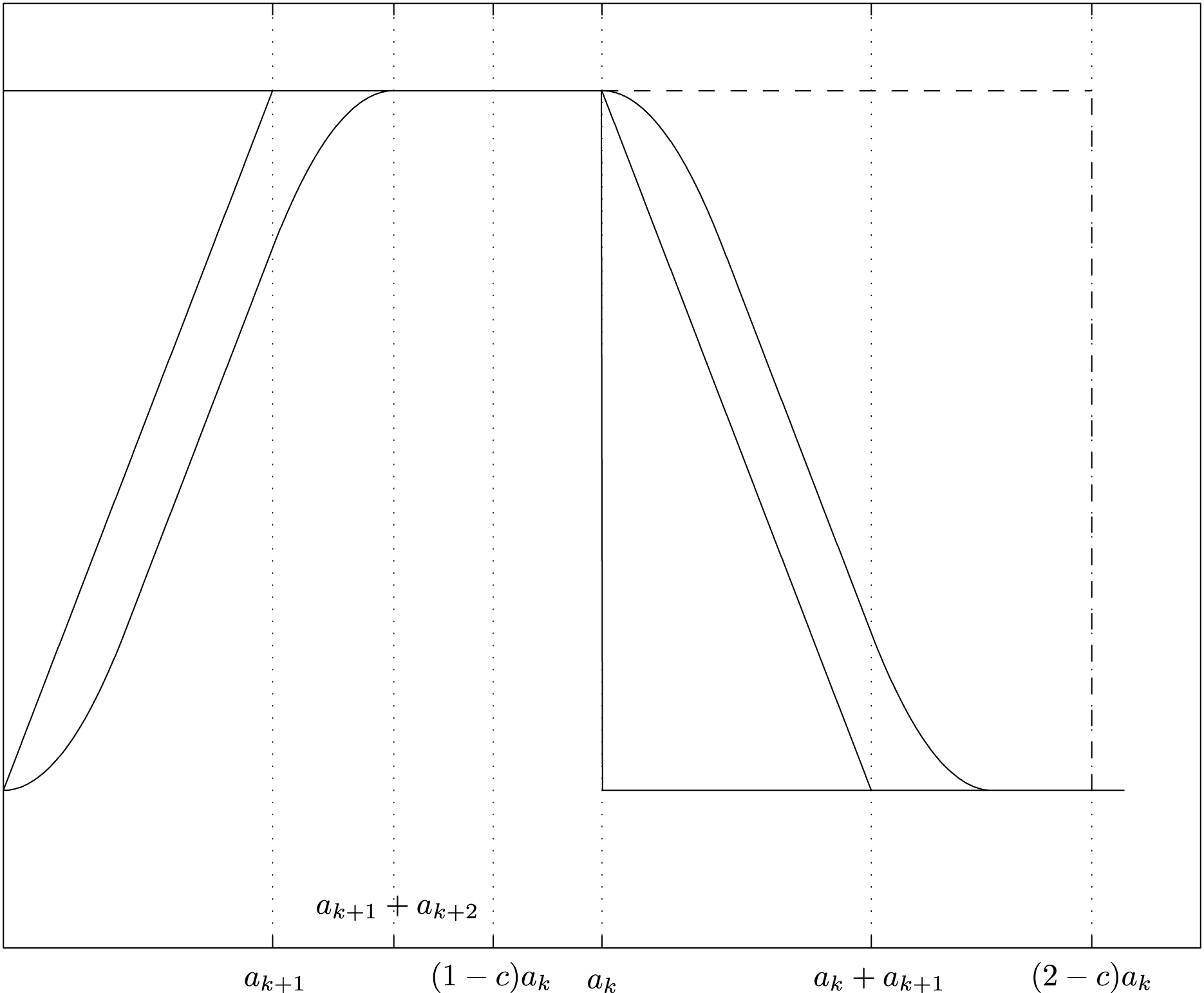}
    \caption{Overlay of a part the graphs of $u_{k}^{(k)}$, $u_{k+1}^{(k)}$ and $u_{k+2}^{(k)}$}
    \label{overlay2}
\end{figure}
The result is trivial for $k=0$. Assume \eqref{indass} holds for some arbitrary $k$ and pick an $x\in J_k$. Now, we have that
$$
u_{\:n+k+1}^{(n)}(x)=\frac{1}{a_{n+k+1}}\int_{0}^{a_{n+k+1}}u_{\:n+k}^{(n)}(x-t)\dd t.
$$
For each $t$ with $0\leq t\leq a_{n+k+1}$ we have that $a_n\geq x-t$ and also
$$
x-a_{n+k+1}\geq a_{n+1}+\ldots +a_{n+k}+a_{n+k+1}-a_{n+k+1}=a_{n+1}+\ldots+a_{n+k},
$$
so $x-t\in J_k$. Thus by the induction assumption, $u_{\:n+k}^{(n)}(x-t)$ is constant and equal to to the value of $u_{n}^{(n)}$ on $I_1$ on the whole region of integration. Therefore the first assertion follows, since we finally have
$$
u_{\:n+k+1}^{(n)}(x)=u_{\:n+k}^{(n)}(x)\frac{1}{a_{n+k+1}}\int_{0}^{a_{n+k+1}}\dd t=u_{\:n+k}^{(n)}(x)=u_{\:n}^{(n)}(x)=\frac{1}{a_0\cdots a_n}.
$$

For the second assertion just observe that $u^{(n)}$ is supported on $2^{n}$ intervals of length $a_{n}+a_{n+1}+\ldots$. On each of these intervals it will have the same shape, and we will estimate this by a rectangle of length $(2-c)a_n$, see the dotted rectangle in Figure~\ref{overlay2}. We get
\begin{multline*}
\nl u^{(n)}\nr_{p}^{p}=\int_{\R}\lvert u^{(n)}(x)\rvert^{p}\dd x = 2^{n}\int_{0}^{a_{n}+a_{n+1}+\ldots}\lvert u^{(n)}\rvert^{p}\dd x \\
\leq 2^{n}\int_{0}^{(2-c)a_n}\frac{1}{a_{0}^{p}\cdots a_{n}^{p}}\dd x = (2-c)\frac{2^{n}a_n}{a_{0}^{p}\cdots a_{n}^{p}}.
\end{multline*}
The last expression is exactly the $p$-th power of the $L^p$-norm of $u_{\:n}^{(n)}$.
\end{proof}

\subsection{Necessity of $\product{M}<\infty$}

In this section we shall investigate to what extent Theorem~\ref{completion-is-smooth} can be considered to be sharp. Under the assumption $\product{M}=\infty$ we will use the machinery developed in the previous section to construct functions $u_j\in\S$ which pointwise grow arbitrarily large, while remaining bounded in our quasi-norm, making any norm inequality of the type
$$
\nl u\nr_{\infty}\leq C\norm{u},\quad u\in\S
$$
impossible. This, in turn, shows that we cannot have embeddings of the kind we encountered in Theorem~\ref{completion-is-smooth} Our main result in this direction is presented in the following theorem.

\ifsmall\else
\newpage 
\fi

\begin{theorem}
\label{prod-infty-main}
Suppose $\product{M} = \infty.$ Assume either that
$$\liminf_{k\to\infty} \q^k\log M_k > 0$$
or that:
\begin{align}
&\text{$\log M_k$ is an increasing and convex sequence,}\tag{$i$}\label{convexity}\\
&\lim_{k\to\infty} \q^k\log M_k = 0, \tag{$ii$}\label{limq}\\
&\liminf_{k\to\infty} \frac{\log M_k}{P(k)} = \infty, \quad \text{for any polynomial $P$.}\tag{$iii$}\label{polygrowth}
\end{align}
Then there can be no constant $C$ such that
$$\nl f\nr_\infty \leq C\norm{f}, \quad f\in\S.$$
\end{theorem}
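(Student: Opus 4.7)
The strategy is, given $\product{M}=\infty$, to exhibit a sequence $\{u_j\}\subset\S$ with $\norm{u_j}$ uniformly bounded while $\nl u_j\nr_\infty\to\infty$; this rules out any such constant $C$. I would take the $u_j$ to be the infinite-convolution splines $u_j = H_{a_0^{(j)}}*H_{a_1^{(j)}}*\cdots$ built in Section~3.1, with parameters $(a_k^{(j)})$ chosen to approximately saturate Proposition~\ref{convol-est}, i.e.\ $\nl u_j^{(k)}\nr_p\approx M_k$ for $k$ up to a truncation index $N_j\to\infty$, after which a carefully chosen tail is appended. The guiding heuristic is that the bound $\nl u\nr_\infty\leq\product{M}\norm{u}$ of Proposition~\ref{sup-est} is sharp precisely under such iterative saturation; when $\product{M}=\infty$, saturating therefore drives $\nl u_j\nr_\infty$ up to the divergent partial product.

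Setting $\alpha_k:=\log(1/a_k)$ and $S_n:=\alpha_0+\cdots+\alpha_n$, the estimate of Proposition~\ref{convol-est} transforms the constraint $\nl u^{(n)}\nr_p\leq M_n$ into a recursive inequality of the form $S_{n-1}\leq\q S_n + p\log M_n + O(n)$. Telescoping yields
\begin{equation*}
S_0 \leq \q^N S_N + \sum_{k=1}^N p\q^{k-1}\log M_k + O(1) = \q^N S_N + \log\mu_{\M,N} + O(1),
\end{equation*}
where $\mu_{\M,N}:=\prod_{k=1}^N M_k^{p\q^{k-1}}$ is the partial product. The growth condition defining $\S$ is $\limsup\q^k S_k\leq 0$. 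By saturating the inequalities for $k\leq N_j$ and appending a tail that drives $\q^k S_k$ down to zero, one arranges $S_0 = \log\mu_{\M,N_j}+O(1)$, whence $\nl u_j\nr_\infty = e^{S_0}\gtrsim\mu_{\M,N_j}\to\infty$ since $\product{M}=\infty$.

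The central technical task is then to verify that the prescribed $(a_k^{(j)})$ yields an honest function in $\S$. One must check (a) the sequence is positive, decreasing, and in $\ell^1$; (b) the separation condition~\eqref{separation} of Proposition~\ref{convol-est} holds so the propositions apply; and (c) the growth condition $\limsup\q^k S_k^{(j)}\leq 0$. This is exactly where the three regularity hypotheses enter: convexity~\eqref{convexity} gives the monotonicity of $(a_k)$; the superpolynomial growth of $\log M_k$ in~\eqref{polygrowth} dominates the $O(n)$ errors against the essentially $(1/\q)^n$-exponential behavior of $\alpha_n$ dictated by the recursion, yielding summability and~\eqref{separation}; and \eqref{limq} is precisely the condition that allows the tail to be constructed so that~(c) holds. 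Under the alternative hypothesis $\liminf\q^k\log M_k>0$, in which $M_k$ already grows doubly exponentially (and $\product{M}=\infty$ is automatic), a much simpler dilation argument suffices: for any compactly supported smooth bump $u$ and $v_\lambda(x):=\lambda^{1/p}u(\lambda x)$, one has $\nl v_\lambda\nr_\infty=\lambda^{1/p}\nl u\nr_\infty\to\infty$ as $\lambda\to\infty$, while $\norm{v_\lambda}=\sup_k \lambda^k\nl u^{(k)}\nr_p/M_k$ stays bounded because $M_k$ eventually dwarfs every $\lambda^k\nl u^{(k)}\nr_p$.

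The hardest step will be the simultaneous enforcement of (b) and (c) in the main spline construction: the separation condition pushes $(a_k)$ toward rapid decay, while the growth condition $\limsup\nl u^{(k)}\nr_\infty^{\q^k}\leq 1$ forbids $(a_0\cdots a_k)^{-1}$ from blowing up too fast. Balancing these competing requirements, which is achieved via the delicate interplay of all three regularity assumptions on $\mathcal{M}$, is the real technical heart of the proof.
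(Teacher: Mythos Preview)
Your outline for the second case (conditions \eqref{convexity}--\eqref{polygrowth}) is in the same spirit as the paper's: build the spline functions $u_j$ from sequences $(a_k^{(j)})$ chosen to nearly saturate the estimate $\nl u^{(k)}\nr_p\le M_k$, read off $\nl u_j\nr_\infty=1/a_0^{(j)}\to\infty$, and check membership in $\S$. The paper implements this saturation through an explicit auxiliary function $h(k,j)=p\sum_{i=k+1}^{j+k}q^i\log M_i$ rather than your telescoping recursion, but the underlying idea and the roles of the three hypotheses match your description.

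However, your treatment of the first case ($\liminf_k q^k\log M_k>0$) via dilation is wrong. With $v_\lambda(x)=\lambda^{1/p}u(\lambda x)$ one computes $\nl v_\lambda^{(k)}\nr_p=\lambda^{k}\nl u^{(k)}\nr_p$, hence
\[
\norm{v_\lambda}\;\ge\;\frac{\nl v_\lambda'\nr_p}{M_1}\;=\;\frac{\lambda\,\nl u'\nr_p}{M_1}\;\longrightarrow\;\infty
\quad\text{as }\lambda\to\infty,
\]
so $\norm{v_\lambda}$ is \emph{not} uniformly bounded. Your claim that ``$M_k$ eventually dwarfs every $\lambda^k\nl u^{(k)}\nr_p$'' addresses only the tail in $k$ for each fixed $\lambda$; it says nothing about the supremum, which is attained at small $k$ and blows up with $\lambda$. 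No rescaling of a fixed bump can work here, since for each fixed $k\ge 1$ the weight $M_k$ is a finite constant and cannot absorb an arbitrarily large factor $\lambda^k$.

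The paper handles this case differently: it does not argue directly, but instead constructs a smaller sequence $\widetilde{\M}\le\M$ with $\mu_{\widetilde{\M}}=\infty$ that \emph{does} satisfy \eqref{convexity}--\eqref{polygrowth} (take $\log\widetilde M_k=q^{-k}/k$ for large $k$), and then transfers the conclusion via the norm comparison: any $f\in\S_{\widetilde{\M}}$ lies in $\S_{\M}$ with $\norm{f}_{\M}\le\norm{f}_{\widetilde{\M}}$, so a bound $\nl f\nr_\infty\le C\norm{f}_{\M}$ would imply $\nl f\nr_\infty\le C\norm{f}_{\widetilde{\M}}$, contradicting the second case applied to $\widetilde{\M}$.
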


The first case concerns sequences $\M$ with at least exponential growth, i.e.~sequences such that eventually $\log M_k\geq C q^{-k}$ holds for some $C>0$.

With regards to the second case, condition \eqref{polygrowth} might seem out of place. However, for $\product{M}=\infty$ to hold it is necessary that
$$
\limsup_{k\to\infty} \frac{\log M_k}{P(k)} = \infty
$$
for any polynomial $P$. Hence condition \eqref{polygrowth} is in a sense also a regularity on the sequence $\M$, removing the possibility that some subsequences are bounded by some polynomial.
Together with condition \eqref{limq}, which says that $\log M_k$ cannot grow as quick as $\q^{-k}$, this somehow tries to pinpoint the region between exponential and polynomial growth.

In all, there is not much wiggle-room for a sequence $\M$ with $\product{M}=\infty$ to not satisfy the conditions, except for oscillations around $\exp(\q^{-k})$.

We begin by proving that the only case we need to consider in Theorem~\ref{prod-infty-main} is the second one, since sequences growing at least as quick as $\q^{-k}$ can easily be handled. We will do this by constructing a smaller weight sequence, adhering to the second case of Theorem~\ref{prod-infty-main}. The norm inequality \eqref{norm-ineq} will ensure that the theorem holds for the original sequence.
\begin{lemma}\label{too-quick}
If the condition
$$\liminf_{k\to\infty} \q^k \log M_k > 0$$
is met, there exists a smaller sequence $\widetilde{\M}=\seq{\tM}$ such that $\mu_{\widetilde{\M}}=\infty$ and satisfies the conditions \eqref{convexity}-\eqref{polygrowth} of Theorem~\ref{prod-infty-main}.
\end{lemma}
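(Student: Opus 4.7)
The plan is to construct $\widetilde{\M}=\seq{\tM}$ with $\log \tM_k$ behaving asymptotically like $\q^{-k}/k$. This is the natural candidate: it sits precisely on the boundary between condition \eqref{limq} and divergence of $\mu_{\widetilde{\M}}$. Indeed, $\q^k\log \tM_k \sim 1/k \to 0$ gives \eqref{limq}, while $\sum \q^{k-1}\log \tM_k \sim \q^{-1}\sum 1/k = \infty$ delivers $\mu_{\widetilde{\M}} = \infty$. Condition \eqref{polygrowth} is automatic since $\q^{-k}/k$ outgrows every polynomial.

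From the hypothesis $\liminf \q^k\log M_k > 0$ I pick $c>0$ and an index $K_0$ such that $\log M_k \geq c\q^{-k}$ for $k\geq K_0$; by enlarging $K_0$ I can further arrange $K_0 \geq 1/c$ and $K_0 > \q/p$. Then I define
\[
\log \tM_k = \begin{cases}0 & k < K_0,\\ \dfrac{\q^{-k}}{k} - \dfrac{\q^{-K_0}}{K_0} & k \geq K_0,\end{cases}
\]
where the subtracted offset is engineered to make $\log \tM_{K_0}=0$, so the two branches glue without a jump. The bound $\tM_k \leq M_k$ is immediate: trivial on $k<K_0$, and on $k\geq K_0$ via $\log \tM_k \leq \q^{-k}/k \leq c\q^{-k} \leq \log M_k$. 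Conditions \eqref{limq} and \eqref{polygrowth} follow by inspection, and $\mu_{\widetilde{\M}}=\infty$ splits into a divergent harmonic tail $\q^{-1}\sum_{k\geq K_0}1/k$ minus a convergent geometric correction.

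The only real work is \eqref{convexity}, the monotonicity and convexity of $\log \tM_k$. Differentiating $f(x) = \q^{-x}/x$ twice gives $f''(x) = \q^{-x}\bigl((\log \q)^2 x^2 + 2x\log \q + 2\bigr)/x^3$; the bracketed quadratic has discriminant $-4(\log \q)^2 < 0$, hence $f'' > 0$ throughout $(0,\infty)$. Convexity of the integer restriction $f(k)$ for $k\geq K_0$ then follows from the identity $f(k+1) - 2f(k) + f(k-1) = \int_{k-1}^k (f'(s+1) - f'(s))\,ds > 0$, and monotonicity $f(k+1)>f(k)$ reduces by a direct computation to $k > \q/p$. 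The step I expect to be the main obstacle is convexity across the seam $k=K_0$: all increments vanish on the flat initial segment, so for convexity the first active increment $f(K_0+1)-f(K_0)$ must be nonnegative --- but this is exactly $K_0 > \q/p$, which was already built into the choice of $K_0$. Hence the single threshold does double duty, simultaneously ensuring monotonicity from $K_0$ onward and convexity at the seam, which closes the construction.
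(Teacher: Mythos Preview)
Your proof is correct and follows essentially the same approach as the paper's, building $\widetilde{\M}$ from the key function $f(k)=\q^{-k}/k$. The only difference is in handling the initial segment: the paper sets $\log\tM_k=\q^{-k}/k$ directly for large $k$, makes the first few terms constant, and then waves away the seam by ``redefining finitely many $\log\tM_k$ after $k\geq N$'' to restore convexity, whereas your offset $\log\tM_k=f(k)-f(K_0)$ (together with the threshold $K_0>\q/p$) makes the junction work out cleanly without any ad~hoc modification.
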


\begin{proof}
By assumption there exists a constant $C$ and an integer $k_0$ such that
$$\q^k\log M_k \geq C > \frac{1}{k}, \quad k>k_0.$$
Therefore define $\tM_k$ for $k>k_0$ by
$$\log\tM_k = \frac{\q^{-k}}{k}$$
and so $M_k\geq \tM_k$ for $k>k_0$.
Note also that we get
$$\liminf_{k\to\infty} \frac{\log\tM_k}{P(k)} = \liminf_{k\to\infty} \frac{\q^{-k}}{kP(k)} = \infty$$
where $P$ is any polynomial.
Then $\log\tM_k$ is both convex and increasing if $k$ is large enough.
Indeed, if we set $g(x)=\frac{\q^{-x}}{x}$ then $g''$ is always positive and $g'(x)>0$ if $k\geq\frac{1}{-\log\q}=:k_1$.

By construction
$$\q^{k}\log\tM_k = \frac{1}{k}$$
so that
$$\lim_{k\to\infty} \q^{k}\log\tM_k = 0 \and \sum_{k\geq N} \q^{k}\log\tM_k = \infty, \quad N=\max(k_0,k_1).$$

Now we want to define $\log\tM_k$ for $0\leq k < N$ so that the whole sequence becomes positive, increasing and convex.
This can clearly be achieved by setting $\log\tM_k$ to be the minimum of $\log M_k$ for $0\leq k \leq N$ and then redefine finitely many $\log\tM_k$ after $k\geq N$ so that the whole sequence becomes convex.
\end{proof}

Now we turn to studying the remaining case of Theorem~\ref{prod-infty-main}.
As mentioned we want to use the machinery in the previous section to construct a function $u$ belonging to $\S$.
We can then use Proposition~\ref{convol-est} and try to choose the sequence $\seq{a}$ so that the expression
$$\nl u_n^{(n)}\nr_p \leq (2-c)^{1/p}\frac{2^{k/p}a_k^{1/p}}{a_0 a_1 \cdots a_k}$$
is bounded in a suitable way, to force $u$ to have finite $\M$-norm, and to satisfy
$$\limsup_{k\to\infty} \left(\frac{1}{a_0 a_1 \cdots a_k}\right)^{\q^k}\leq 1.$$
It will be easier to study the logarithmized versions of these requirements.
To this end we set $\alpha_i = -\log a_i$ so that these expressions read:
$$\frac{1}{p}\log(2-c) + \frac{k}{p}\log 2 - \frac{1}{p}\alpha_k + \sum_{i=0}^k \alpha_i$$
and
$$\limsup_{k\to\infty}  \q^k \sum_{i=0}^k \alpha_i \leq 0.$$

To further simplify the construction of our sequences we set
$$a_{ij}=g_{ij} e^{-\frac{1}{1-c}i}$$
for a sequence $g=(g_{ij})_{i,j\geq 0}$ with $0<g_{ij}\leq 1$ and $g_{(i+1)j}\leq g_{ij}$.
We do this because this makes (\ref{separation}) hold automatically.
Indeed, for any $j$:
\ifsmall
\begin{align*}
\sum_{i>k} a_{ij} \leq g_{kj} \sum_{i>k} e^{-\frac{1}{1-c}i} &= g_{kj}e^{-\frac{1}{1-c}k}\sum_{i-k>0}e^{-\frac{1}{1-c}(i-k)}\\
 & =a_{kj}\frac{e^{-\frac{1}{1-c}}}{1-e^{-\frac{1}{1-c}}} \leq (1-c) a_{kj}.
\end{align*}
\else
$$\sum_{i>k} a_{ij} \leq g_{kj} \sum_{i>k} e^{-\frac{1}{1-c}i} = g_{kj}e^{-\frac{1}{1-c}k}\sum_{i-k>0}e^{-\frac{1}{1-c}(i-k)}=a_{kj}\frac{e^{-\frac{1}{1-c}}}{1-e^{-\frac{1}{1-c}}} \leq (1-c) a_{kj}.$$
\fi

If we can achieve that $g_{ij}\to 0$ as $j\to\infty$ for any fixed $i$ then since $a_{ij}\leq e^{-\frac{1}{1-c}i}$ we get by the dominated convergence theorem that
$$\sum_{i=0}^\infty a_{ij} \to 0.$$
Hence the support of the corresponding functions $u_j$, constructed by $a_j=(a_{ij})_{i\geq0}$, will tend to zero as $j\to\infty$.

The final form of the requirements can now be rephrased to incorporate the latest simplification.
If we set $\gamma_{ij}=-\log g_{ij}$ the first expression becomes
$$\frac{1}{p}\log(2-c) + \frac{k}{p}\log 2 - \frac{1}{p}\gamma_{kj} - \frac{k}{p(1-c)} + \sum_{i=0}^k \gamma_{kj} + \frac{k(k+1)}{2(1-c)}$$
or more neatly stated as
\begin{equation}
\label{finite-norm}
P(k) - \frac{1}{p}\gamma_{kj} + \sum_{i=0}^k \gamma_{kj}
\end{equation}
where $P$ is the second order polynomial
$$P(k)=\frac{1}{p}\log(2-c) + \frac{k}{p}\log 2 + \frac{k(k+1)}{2(1-c)}.$$
The second is satisfied if we have simply that
\begin{equation}
\label{log-liminf}
\limsup_{k\to\infty} \q^k \sum_{i=0}^k \gamma_{ij}\leq 0.
\end{equation}
since
$$\limsup_{k\to\infty} \q^k \sum_{i=0}^k \alpha_i = \limsup_{k\to\infty} \q^k \sum_{i=0}^k \gamma_{ij} + \lim_{k\to\infty}\q^k\frac{k(k+1)}{2(1-c)} \leq 0.$$

Now that we know which requirements we want our sequences to meet we show that this is possible in the setting of Theorem~\ref{prod-infty-main}.
\begin{lemma}
Suppose $\product{M} = \infty$ and that conditions \eqref{convexity} and \eqref{limq} of Theorem~\ref{prod-infty-main} are fulfilled.
Then there exists a positive double sequence $(\gamma_{ij})_{i,j\geq 0}$ such that for any $j$ the sequence $\gamma_j=(\gamma_{ij})_{i\geq 0}$ satisfy (\ref{log-liminf}) and
\begin{equation}
\label{half-finite-norm}
- \frac{1}{p}\gamma_{kj} + \sum_{i=0}^k \gamma_{kj} \leq \q \log M_k.
\end{equation}
Furthermore $\gamma_{ij}\to \infty$ as $j\to\infty$ for any fixed $i$ and $\gamma_{ij} \geq \gamma_{(i-1)j}$.
\end{lemma}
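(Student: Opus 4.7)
Write $m_k := \log M_k$ and $d_k := m_k - m_{k-1}$. First observe that the hypotheses force $d_k \to \infty$: by the convexity \eqref{convexity} the sequence $d_k$ is non-decreasing, and if it were bounded then $m_k$ would grow at most linearly, making $\sum_k q^k m_k$ converge --- contradicting $\product{M}=\infty$. Fix now any sequence $V_j > 0$ with $V_j \to \infty$ and construct the double sequence recursively in $i$: put $\gamma_{0j}:= V_j$ and for $k \geq 1$
\[
\gamma_{kj} := \max\!\Bigl(\gamma_{(k-1)j},\; \tfrac{p}{q}\, s_{(k-1)j} - p\, m_k\Bigr),
\]
where $s_{kj} := \sum_{i=0}^k \gamma_{ij}$. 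The first argument of the maximum enforces monotonicity $\gamma_{ij}\geq\gamma_{(i-1)j}$, and the second is the smallest value of $\gamma_{kj}$ compatible with \eqref{half-finite-norm}; hence monotonicity, positivity ($\gamma_{ij}\geq V_j$), and \eqref{half-finite-norm} are immediate, and $\gamma_{ij}\to\infty$ as $j\to\infty$ follows from $\gamma_{ij}\geq V_j$.

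The substantive content is verifying \eqref{log-liminf}. As long as the second argument of the maximum dominates --- the ``saturated'' phase --- \eqref{half-finite-norm} holds with equality, and a short manipulation (using $s_{(k-1)j}=\gamma_{(k-1)j}/p + q m_{k-1}$) rewrites the recursion as $\gamma_{kj} = \gamma_{(k-1)j}/q - p\, d_k$, giving
\[
q^k \gamma_{kj} \;=\; V_j \;-\; p\sum_{i=1}^k q^i d_i.
\]
Since $\product{M} = \infty$ forces $\sum_i q^i d_i = \infty$, there is a smallest index $K_j$ at which the saturated value drops below the monotonicity bound, which is equivalent to $\gamma_{(K_j-1)j} \leq q\, d_{K_j}$. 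Setting $C_j := \gamma_{(K_j-1)j}$, I would then prove by induction on $l\geq 0$ that $\gamma_{(K_j+l)j} = C_j$; the induction step reduces, after a brief computation, to the inequality $C_j(1+l) \leq q\sum_{i=K_j}^{K_j+l} d_i$, which follows from $C_j \leq q\, d_{K_j}$ and the fact that $d$ is non-decreasing. Once $\gamma_{kj}$ is eventually constant, $s_{kj}$ is eventually linear in $k$, so $q^k s_{kj}\to 0$, which is \eqref{log-liminf}.

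The main obstacle is the stability claim: once the monotonicity branch is selected at $K_j$, one must rule out that the saturated branch overtakes it at some later index. This is precisely where the convexity of $m_k$ enters, through the non-decreasingness of $d_k$ in the comparison above; it would fail otherwise. The hypothesis $q^k m_k\to 0$ from \eqref{limq} plays no role in this construction itself and will only be needed subsequently when the sequences $\gamma_{ij}$ are used to build the functions $u_j$.
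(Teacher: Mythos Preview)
Your approach is correct and genuinely different from the paper's. The paper writes down an explicit formula: setting $h(k,j)=p\sum_{i=k+1}^{j+k}q^{i}\log M_i$ (a sliding-window partial sum of the divergent series behind $\product{M}=\infty$) and recovering $\gamma_{kj}$ from it, then checks all four required properties by direct calculation --- and in verifying \eqref{log-liminf} it uses hypothesis \eqref{limq} in an essential way. Your greedy recursion instead forces monotonicity and \eqref{half-finite-norm} by construction and obtains \eqref{log-liminf} from the eventual constancy of $k\mapsto\gamma_{kj}$, using only convexity and $\product{M}=\infty$. So your claim that \eqref{limq} plays no role here is correct; in fact, inspecting the proof of Theorem~\ref{prod-infty-main} one sees that \eqref{limq} enters \emph{only} through the present lemma, so --- contrary to your final remark --- your argument removes it from the hypotheses of the second case of that theorem altogether.

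One presentational gap to patch: your displayed identity $q^{k}\gamma_{kj}=V_j-p\sum_{i=1}^{k}q^{i}d_i$ and the manipulation leading to it rely on the relation $s_{(k-1)j}=\gamma_{(k-1)j}/p+qm_{k-1}$, which holds only when step $k-1$ is itself saturated --- this fails at $k=1$, and indeed for $p\le\tfrac12$ the saturated branch is never selected at $k=1$. The repair is easy: either the monotonicity branch is taken forever (then $\gamma_{kj}\equiv V_j$ and \eqref{log-liminf} is immediate), or there is a first saturated index $L\ge1$; the telescoping then holds for $k\ge L+1$ with base $q^{L}\gamma_{Lj}$, and since $\sum q^{i}d_i=\infty$ the saturated phase still terminates at some $K_j>L$. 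Your stability argument at $K_j$ goes through verbatim, because step $K_j-1$ is saturated and hence $s_{(K_j-1)j}=C_j/p+qm_{K_j-1}$ as you use. (The observation $d_k\to\infty$ is true but not actually needed in the argument.)
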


\begin{proof}
Instead of defining $\gamma_{ij}$ directly we consider an auxiliary function
$$h(k,j)=\q^k \sum_{i=0}^k \gamma_{ij},$$
so that
$$\sum_{i=0}^k \gamma_{ij} = \frac{h(k,j)}{\q^k}$$
and let $h$ define $\gamma_{kj}$ by the two equivalent expressions, which follows from the telescoping nature of the definition of $h$ as a partial sum of the $\gamma_{ij}$:
$$\gamma_{kj} = \frac{h(k,j)-\q h(k-1,j)}{\q^k} = \frac{h(k,j)-h(k-1,j)}{\q^k} + p\frac{h(k-1,j)}{\q^k}.$$

To choose a relevant function $h$ we take the logarithm of the equation $\product{M}=\infty$:
$$p\sum_{i=0}^\infty \q^i\log M_i = \infty$$
(where we have multiplied this by $\q$ to ease notation) and set
$$h(k,j)=p\sum_{i=k+1}^{j+k} \q^i\log M_i.$$
The idea here is that since the sum diverges we get that $h(k,j)\to\infty$ as $j\to\infty$ for fixed $k$.

We start with verifying (\ref{log-liminf}) by estimating
\begin{align*}
h(k,j) &= p\sum_{i=k+1}^{j+k} \q^i\log M_i \leq p\q^{j+k}\log M_{j+k} \sum_{i=k+1}^{j+k} \q^{i-(j+k)}
\end{align*}
where the sum does not depend on $k$ since
\begin{align*}
\sum_{i=k+1}^{j+k} \q^{i-(j+k)} &= \{i=k+1+i'\} = \q^{1-j}\sum_{i'=0}^{j-1} \q^{i'}\\
& = \q^{1-j} \frac{1-\q^j}{1-\q} = \frac{\q}{p}\big(\q^{-j}-1\big).
\end{align*}
Therefore we can use the assumption $\lim_{k\to\infty} \q^k\log M_k=0$ to assert that
$$\limsup_{k\to\infty} \q^k \sum_{i=0}^k \gamma_{ij} = \limsup_{k\to\infty} h(k,j)= 0$$
and this is (\ref{log-liminf}).

To verify that we can satisfy (\ref{half-finite-norm}) we calculate
$$- \frac{1}{p}\gamma_{kj} + \sum_{i=0}^k \gamma_{kj} = - \frac{h(k,j)-\q h(k-1,j)}{p\q^k}+\frac{h(k,j)}{\q^k}=\frac{h(k-1,j) - h(k,j)}{p\q^{k-1}}.$$
Hence we are interested in the expression
\begin{align*}
h(k-1,j) - h(k,j) & = p\sum_{i=k}^{j+k-1}\q^i\log M_i - p\sum_{i=k+1}^{j+k} \q^i\log M_i\\
&= p\q^k\log M_k - p\q^{j+k}\log M_{j+k} \leq p\q^k\log M_k
\end{align*}
and therefore
$$- \frac{1}{p}\gamma_{kj} + \sum_{i=0}^k \gamma_{kj} \leq \q\log M_k$$
which is (\ref{half-finite-norm}).

To check the properties of $\gamma_{kj}$ we need to calculate
\begin{align*}
h(k,j)-qh(k-1,j) &= p\sum_{i=k+1}^{j+k}\q^i\log M_i - \underbrace{p\sum_{i=k}^{j+k-1}\q^{i+1}\log M_i}_{\text{set $i'=i+1$}}\\
&= p\sum_{i=k+1}^{j+k}\q^i\log\frac{M_i}{M_{i-1}}.
\end{align*}
Therefore since $M_i\geq M_{i-1}$ we get that $h(k,j)-qh(k-1,j)\geq 0$ and then $\gamma_{kj}\geq 0$.

The growth of $\gamma_{kj}$ follows if we note that for fixed $k$ we have
$$h(k-1,j) - h(k,j)=p\q^k\log M_k - p\q^{j+k}\log M_{j+k}$$
so that
$$\big|h(k-1,j) - h(k,j)\big| \leq p\q^k\log M_k + p\q^{j+k}\log M_{j+k}$$
which is bounded.
Indeed, the last term tends to zero as $j$ tends to infinity by assumption.
Therefore
$$\gamma_{kj} = \frac{h(k,j)-h(k-1,j)}{\q^k}  + p\frac{h(k-1,j)}{\q^k} \to \infty, \quad j\to\infty.$$

The last thing we need to check is that $\gamma_{kj} \geq \gamma_{(k-1)j}$:
\begin{align*}
\gamma_{kj}-\gamma_{(k-1)j} &=\frac{h(k,j)-qh(k-1,j)}{\q^k}-\frac{h(k-1,j)-\q h(k-2,j)}{\q^{k-1}}\\
&= p\sum_{i=k+1}^{j+k}\q^{i-k}\log\frac{M_i}{M_{i-1}} - p\underbrace{\sum_{i=k}^{j+k-1}\q^{i-k+1}\log\frac{M_i}{M_{i-1}}}_{\text{set $i'=i+1$}}\\
&= p\sum_{i=k+1}^{j+k}\q^{i-k}\left(\log\frac{M_i}{M_{i-1}} - \log\frac{M_{i-1}}{M_{i-2}}\right).
\end{align*}
Where the last inequality follows by the convexity of $\log M_k$; that is,
\begin{equation*}
\log M_i + \log M_{i-2} - 2\log M_{i-1}\geq 0 \geq 0,\quad i\geq 2. \qedhere
\end{equation*}
\end{proof}

Now we are finally ready to prove this section's main theorem.

\begin{proof}[Proof of Theorem \ref{prod-infty-main}]
Without loss of generality we can assume that the situation is as described by the conditions \eqref{convexity}-\eqref{polygrowth}. Indeed, if not, by Lemma~\ref{too-quick} we get a smaller class which falls under these assumptions, and by \eqref{norm-ineq} we cannot have the norm inequality under consideration for the bigger class either.

Under these conditions on $\M$ we can apply this lemma and construct a sequence of functions $u_j$ with vanishing supports as $j\to\infty$, having integral one and
$$\nl u_j^{(k)}\nr_p \leq e^{P(k)} M_k^{\q}$$
so that
$$\norm{u_j} \leq \sup_{k\geq 0} \frac{e^{P(k)}}{M_k^p}.$$
By the condition \eqref{polygrowth} we see that
$$\norm{u_j}\leq C$$
for some constant $C$ depending only on the sequence $M_k$ and therefore they are uniformly bounded in $\S$ and their supremum norms tends to infinity. This last part deserves a comment. Condition~\eqref{polygrowth} implies that $\log M_k\geq P(k)$ eventually, so for large $k$ we have $M_k\geq e^{P(k)}$. This clearly ensures boundedness of the quotient.
\end{proof}

\subsection{Necessity of a bound on $\limsup \nl f^{(n)}\nr_{\infty}^{\q^{n}}$}

It is natural to ask if the requirement \eqref{liminf} is really necessary. After all, it appeared in a calculation when we tried to estimate the supremum norm of a function from above by a product involving the $L^p$-norms of its derivatives --- we could get a bound, but only after assuming that the supremum norms don't grow too quickly with the order of the derivative.

We will not investigate this question with any great resolution, but only say what happens if we drop it altogether. In some sense, this theorem is also very close to a direct generalization of Peetre's main result concerning $W^{p,k}$ in \cite{Peetre} to the case $k=\infty$.

Now for the main result in this direction.
\begin{theorem}\label{nlmain}
With the notation above; there exists a canonical, continuous embedding 
$$
L^{p}(\R) \hookrightarrow \Ccomp.
$$
Explicitly one can map $f\in L^{p}$ to a Cauchy sequence $(f_i)$ in $\C$ such that $f_i\to f$ in $L^p$ and for each derivative we have \mbox{$f_i^{(n)}\to0$} in $L^{p}$.
\end{theorem}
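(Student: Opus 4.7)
The strategy is to define $\iota\colon L^{p}(\R)\to\Ccomp$ by sending $f$ to the equivalence class of a Cauchy sequence $(f_i)\subset\C$ with $f_i\to f$ in $L^p$ and, crucially, with the \emph{uniform} derivative decay $\sup_{n\geq 1}\nl f_i^{(n)}\nr_p/M_n\to 0$. Granting the existence of such sequences, the fact that $(f_i)$ is Cauchy in $\norm{\cdot}$ is immediate from the $p$-triangle inequality \eqref{triangle}: for $n\geq 1$ the ratios $\nl (f_i-f_j)^{(n)}\nr_p^{p}/M_n^{p}$ are uniformly dominated by the $p$-th powers of the vanishing suprema for $f_i$ and $f_j$, while the $n=0$ contribution is handled by $L^p$-Cauchy-ness together with $M_0\geq 1$. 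The structural consequences are equally short: two such sequences for the same $f$ are equivalent by the same quasi-triangle argument, so $\iota$ is well-defined; linearity is plain; injectivity follows since $\iota(f)=0$ implies $\nl f_i\nr_p\leq M_0\norm{f_i}\to 0$; and continuity follows from $\lim_i\norm{f_i}=\nl f\nr_p/M_0\leq \nl f\nr_p$.

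The heart of the proof is the construction itself. It suffices to prove that for every $f\in L^p$ and every $\epsilon>0$ there exists $g\in\C$ with $\nl g-f\nr_p<\epsilon$ and $\nl g^{(n)}\nr_p\leq\epsilon M_n$ for all $n\geq 1$; iterating with $\epsilon_i\downarrow 0$ then yields the required Cauchy sequence. By density of bounded, compactly supported simple functions in $L^p$, one reduces to the model case $f=c\chi_{[0,1]}$, which I approximate by a scaled spline $g=cu$ produced by the machinery of Subsection~3.1. With $a_0=1$, Proposition~\ref{convol-est} gives
\begin{equation*}
\nl u^{(n)}\nr_p\leq (2-c)^{1/p}\, 2^{n/p}\,\frac{a_n^{1/p-1}}{a_0\, a_1\cdots a_{n-1}},
\end{equation*}
an expression which --- because $1/p-1>0$ --- tends to $0$ as $a_n\to 0$ with the earlier $a_k$'s held fixed. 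Thus, recursively at each step one can choose $a_n$ small enough to enforce the level-$n$ bound while respecting both the decreasing requirement and the separation condition \eqref{separation} (the latter guaranteed by a sufficiently fast geometric decay). Crucially, later choices $a_{n+k}$ do not disturb the bound at level $n$, so the recursion is internally consistent and produces a single $u\in\C$ of the required kind.

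The main obstacle I anticipate is the simultaneous reconciliation of the competing constraints in this recursion: \eqref{separation}, the decreasing property, the derivative bound at every order $n\geq 1$, and the $L^p$-approximation error $\nl u-\chi_{[0,1]}\nr_p\lesssim (\sum_{k\geq 1}a_k)^{1/p}$, all of this without any a priori assumption on the growth of $M_n$ beyond $M_n\geq 1$. I expect to resolve this by first fixing a small global scale $\delta$ bounding the tail $\sum_{k\geq 1}a_k<\delta$ (so that the $L^p$-error is at most $\delta^{1/p}<\epsilon$), and then imposing a geometric decay rate slightly stronger than what \eqref{separation} strictly requires, refining by shrinking individual $a_n$ still further whenever the level-$n$ derivative bound demands it. Passing from the model $c\chi_{[0,1]}$ to a general simple $f\in L^p$ is then accomplished by a finite superposition of translates and dilates of such spline bumps, with the $p$-th power triangle inequality \eqref{triangle} controlling the $L^p$-norms of the derivatives of the sum.
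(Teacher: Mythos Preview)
Your proposal is correct and follows essentially the same route as the paper: the paper constructs a compactly supported mollifier $v$ via the infinite convolution of Subsection~3.1 with the $a_n$'s chosen recursively small enough to force $\nl v^{(k)}\nr_p<\epsilon_1 M_{k+1}$ (your choice $a_0=1$ simply folds the convolution with $\chi_{[0,1]}$ into the spline itself), then approximates $\chi_I$ by $\chi_I*v$, passes to step functions by finite superposition, and to $L^p$ by density, with the well-definedness, injectivity and continuity arguments matching yours almost verbatim. One small caveat: your phrase ``translates and dilates'' is imprecise since dilation rescales derivative $L^p$-norms badly for $0<p<1$; for a general interval $I$ you should either set $a_0=|I|$ and rerun the recursion, or (as the paper does) fix the mollifier with support small enough to fit inside every interval under consideration and convolve.
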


\begin{remark}
In this theorem, and in fact also in Theorem~\ref{completion-is-smooth}, one could equally well have chosen the quasi-norm
$$
\nl f\nr=\left(\sum_{k\geq1}\frac{\nl f^{(k)}\nr_{p}^{p}}{M_k^{p}}\right)^{1/p}.
$$
The interested reader will be able to fill in the details.
\end{remark}

We need the following sequence of results, which constructs mollifiers in $\C$ which we can then use to approximate step functions, which in turn approximate $L^{p}$-functions.

\begin{lemma}\label{nlmollifier} Let $\epsilon_1$ and $\epsilon_2$ be arbitrary positive numbers. Then we can find a non-zero function $v\in \C$ such that
\begin{enumerate}
\item for any $k\geq0$ we have
\begin{equation}\label{moll-deriv}
\nl v^{(k)}\nr_p<\epsilon_1 M_{k+1},\quad k\geq0,
\end{equation}
\item $\supp v\subset [0,\epsilon_2]$,
\item $v$ has integral equal to one:
$$
\int_{\R} v(t)\dd t=1.
$$
\end{enumerate}
\end{lemma}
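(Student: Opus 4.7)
The plan is to realize $v$ as the limit of an infinite convolution $v = \lim_{n\to\infty} u_n$ with $u_n = H_{a_0} * H_{a_1} * \cdots * H_{a_n}$, exactly as in Section~2.1, for a decreasing sequence of positive numbers $\seq{a}$ that I construct inductively. This automatically produces a compactly supported $C^{\infty}$ function with $\operatorname{supp} v \subset [0,\sum_k a_k]$ and $\int v = \prod_k \int H_{a_k} = 1$, so the work lies entirely in choosing $\seq{a}$ so that $\sum_k a_k \le \epsilon_2$, the separation condition~\eqref{separation} holds, and the bound \eqref{moll-deriv} is achieved.

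First I would impose the geometric decrease $a_k \le \tfrac14 a_{k-1}$, which gives $\sum_{j>k} a_j \le \tfrac13 a_k$, forcing \eqref{separation} with $c = 2/3$, so that Proposition~\ref{convol-est} applies and yields
$$
\nl v^{(k)}\nr_p \le (2-c)^{1/p}\frac{2^{k/p}a_k^{1/p}}{a_0 a_1 \cdots a_k} = (2-c)^{1/p}\,\frac{2^{k/p}\,a_k^{1/p-1}}{a_0 a_1\cdots a_{k-1}}.
$$
The crucial observation is that since $0<p<1$, the exponent $1/p-1$ is \emph{positive}, so the right-hand side tends to $0$ as $a_k \to 0^+$, with $a_0,\ldots,a_{k-1}$ held fixed. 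Thus, at every induction step we have the freedom to choose $a_k$ arbitrarily small.

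Concretely, having chosen $a_0,\ldots,a_{k-1}$, I would select $a_k$ small enough to simultaneously satisfy
$$
a_k \le \tfrac{1}{4}a_{k-1},\qquad a_k \le 2^{-k-1}\epsilon_2,\qquad (2-c)^{1/p}\frac{2^{k/p}a_k^{1/p-1}}{a_0\cdots a_{k-1}} < \epsilon_1 M_{k+1},
$$
together with an auxiliary bound of the form $(2-c)^{1/p}2^{k/p}a_k^{1/p-1}/(a_0\cdots a_{k-1}) < 2^{-k}$ which, via $M_k\ge1$, guarantees $\nl v^{(k)}\nr_p/M_k \to 0$ and hence $\norm{v} < \infty$, i.e.\ $v\in\C$. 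All of these are upper bounds on $a_k$, so they can be met jointly; the initial step $k=0$ is handled by taking $a_0^{1/p-1} < \epsilon_1 M_1/(2-c)^{1/p}$ together with $a_0 \le \epsilon_2/2$. The bound $a_k \le 2^{-k-1}\epsilon_2$ yields $\sum_{k\ge 0}a_k \le \epsilon_2$, giving property~(2), and the third constraint is precisely~\eqref{moll-deriv} by Proposition~\ref{convol-est}, giving~(1).

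Finally I would check (3) by noting that each $u_n$ has integral $1$ and that the $u_n$ converge to $v$ uniformly on the common compact support $[0,\epsilon_2]$, so $\int v = \lim_n \int u_n = 1$. The main obstacle is really only bookkeeping — verifying that the three upper bounds on $a_k$ are genuinely compatible at each induction step — and this is immediate precisely because the small-exponent condition $0<p<1$ makes $1/p-1 > 0$, which is what turns the derivative bound into a condition that becomes \emph{weaker} (not stronger) as $a_k$ shrinks.
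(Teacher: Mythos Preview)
Your proof is correct and follows essentially the same approach as the paper: construct $v$ as an infinite convolution of $H_{a_k}$'s, impose a geometric decay to guarantee the separation condition~\eqref{separation}, invoke Proposition~\ref{convol-est}, and choose each $a_k$ small enough inductively, exploiting that the exponent $1/p-1$ is positive. Your argument is in fact slightly more careful than the paper's, since you add an auxiliary bound forcing $\nl v^{(k)}\nr_p/M_k\to 0$ to certify $\norm{v}<\infty$ (i.e.\ $v\in\C$), a point the paper leaves implicit.
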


\begin{remark}
We can think of this as saying that we can find $v$ in another class $\widetilde{\C}$ defined with respect to the shifted sequence $\mathcal{N}=\seq{N}$ where $N_k=M_{k+1}$, such that $\nl v \nr_{\widetilde{\C}}<\epsilon_1$. The control of the $p$-norms of $v^{(k)}$ can then be summarized as $\nl v\nr_{\mathcal{N}}\leq\epsilon_1$.
\end{remark}
We will do this by using the infinite convolutions previously discussed.

\begin{proof}
If $\seq{a}$ is a sequence of positive numbers and $u_n$ is given by \eqref{unconstruct}, we let $u$ denote its limit as $n\to \infty$.

To be able to use the machinery for the infinite convolutions developed in the preceding sections, recall that we have to fulfill the requirement \eqref{separation} for some number $0<c<1$, i.e.
\begin{equation}
\sum_{j>k}a_j\leq (1-c)a_k,\quad k\geq0.
\end{equation}
Then $\nl u^{(n+1)}\nr_{p}\leq \epsilon_1 M_{n+1}$ can be ensured by using the estimate from Proposition \eqref{convol-est} and requiring that
$$
(2-c)^{1/p}\frac{2^{n/p}a_n^{1/p}}{a_0\cdot \ldots \cdot a_n}\leq \epsilon_1 M_{n+1}, \quad n\geq0.
$$
Solving for $a_n$ we find that this is equivalent to
\begin{equation}\label{ancondition}
a_{n}^{\frac{1-p}{p}}\leq \epsilon_1 M_{n+1}\frac{a_0\cdots a_{n-1}}{(2-c)^{1/p}2^{n/p}}, \quad n\geq1.
\end{equation}
Observe that if $M_{n+1}$ and $a_j, j=0,1,\ldots, n-1$ are given, then this can always be ensured to hold by choosing $a_n$ small enough.

Now suppose we have a sequence that satisfies \eqref{separation}, but violates \eqref{ancondition} for some $n_0$. The idea is then to redefine $a_{n_0}$ so that \eqref{ancondition} holds and then redefine the tail $(a_n)_{n\geq n_{0}}$ so that it is at least geometrically decreasing, ensuring \eqref{separation}. Explicitly we let $a_0$ be arbitrarily chosen with the only requirement that $a_0\leq (\epsilon_1 M_1)^{p/(1-p)}$. We then define the remaining numbers recursively by
$$
a_n=\min\left\{a_0r^{n}, \ldots, a_kr^{n-k}, \ldots, a_{n-1}r, \left(\epsilon_1 M_{n+1} \frac{a_0\cdots a_{n-1}}{(2-c)^{1/p}2^{n/p}}\right)^{p/(1-p)}\right\},
$$
where $r<1$ is defined by the requirement $\sum_{j>0} r^j=1-c$.

That $(a_n)_{n\geq0}$ satisfies \eqref{ancondition} is trivially true. The same holds for \eqref{separation}. Indeed, since by the definition of $a_j$ as a minimum we get $a_j\leq a_{j-k}r^{j-k}$, and therefore
$$
\sum_{j>k}a_j\leq a_k\sum_{j-k>0}r^{j-k}=(1-c)a_k.
$$

Lastly, the measure of the support of $u$ is
$$
m(\supp u) = \sum_{j\geq0}a_j = a_0+\sum_{j>0}a_j< (2-c)a_0.
$$
Since $a_0$ was arbitrarily chosen (up to an upper bound) this can certainly assumed to be less than $\epsilon_2$. By construction, all infinite convolutions of this type have integral $1$.
\end{proof}

On our way to approximate all $L^p$-functions, we now use these mollifiers to approximate characteristic functions of arbitrary intervals.

\begin{lemma}\label{nlstep} 
For any interval $I=[a,b]$ and any $\epsilon$ and $\eta>0$ we can find $u\in\C$ such that
$$
\nl u-\chi_I\nr_{p}<\epsilon \and \nl u^{(k)}\nr_{p}\leq \eta M_k,\quad k\geq1.
$$
\end{lemma}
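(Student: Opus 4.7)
The plan is to smooth the characteristic function $\chi_I$ by convolving it with a mollifier $v$ provided by Lemma~\ref{nlmollifier}. Setting $u = \chi_I * v$, we have
$$u^{(k)} = \chi_I * v^{(k)} = v^{(k-1)}(\,\cdot\, - a) - v^{(k-1)}(\,\cdot\, - b), \quad k\geq 1,$$
since differentiating $\chi_I$ in the sense of distributions yields $\delta_a - \delta_b$. Applying the $p$-triangle inequality \eqref{triangle} gives
$$\nl u^{(k)}\nr_p^p \leq \nl v^{(k-1)}(\,\cdot\,-a)\nr_p^p + \nl v^{(k-1)}(\,\cdot\,-b)\nr_p^p = 2\nl v^{(k-1)}\nr_p^p,$$
so $\nl u^{(k)}\nr_p \leq 2^{1/p}\nl v^{(k-1)}\nr_p$ for $k\geq 1$.

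To secure the derivative bound, I would invoke Lemma~\ref{nlmollifier} with $\epsilon_1 = 2^{-1/p}\eta$ (and $\epsilon_2$ to be chosen later), producing a smooth, compactly supported $v$ with $\nl v^{(k-1)}\nr_p < \epsilon_1 M_k$ for every $k\geq 1$, integral one, and support in $[0,\epsilon_2]$. Combined with the preceding estimate this immediately yields
$$\nl u^{(k)}\nr_p \leq 2^{1/p}\epsilon_1 M_k = \eta M_k, \quad k\geq 1,$$
as required. Smoothness of $u$ is inherited from $v$, and since $\nl u\nr_p \leq (b-a)^{1/p}+\epsilon$ is finite while $M_k\geq 1$, we conclude $u\in\C$.

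It remains to check the $L^p$-proximity to $\chi_I$. Because $v\geq 0$ with $\int v = 1$ and $\supp v\subset[0,\epsilon_2]$, the convolution $u(x) = \int_0^{\epsilon_2} \chi_I(x-t)v(t)\dd t$ satisfies $0\leq u \leq 1$, equals $1$ on $[a+\epsilon_2, b]$, and vanishes outside $[a, b+\epsilon_2]$ (assuming without loss of generality $\epsilon_2<b-a$). Hence $u$ and $\chi_I$ can differ only on the two intervals $[a,a+\epsilon_2]$ and $[b,b+\epsilon_2]$, where $|u-\chi_I|\leq 1$. This gives
$$\nl u - \chi_I\nr_p^p \leq 2\epsilon_2,$$
so choosing $\epsilon_2 < \epsilon^p/2$ at the outset produces the desired approximation.

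The construction has no genuine obstacle: the two conclusions are decoupled since the derivative estimate depends only on $\epsilon_1$ through Lemma~\ref{nlmollifier}, while the $L^p$-closeness depends only on the support size $\epsilon_2$. The only mild point to be careful about is using the fact that $v\geq 0$ (which holds by construction as an iterated convolution of non-negative box functions) to ensure the pointwise bound $0\leq u\leq 1$, which in turn controls $|u-\chi_I|$ on the transition regions.
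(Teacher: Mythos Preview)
Your proof is correct and follows essentially the same approach as the paper: convolve $\chi_I$ with the mollifier from Lemma~\ref{nlmollifier}, use $\chi_I' = \delta_a - \delta_b$ to shift one derivative onto $v$, and control the $L^p$-error via the support parameter $\epsilon_2$. The only cosmetic differences are that the paper exploits disjointness of the two translates (choosing $\epsilon_2 < (b-a)/2$) to get equality $\nl u^{(n+1)}\nr_p = 2^{1/p}\nl v^{(n)}\nr_p$ rather than your triangle-inequality bound, and it uses the cruder estimate $|u-\chi_I|\leq 2$ where you obtain $|u-\chi_I|\leq 1$ from the non-negativity of $v$.
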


\begin{proof}
Consider $u=\chi_I\ast v$ where $v$ is the function constructed in the previous lemma for some $\epsilon_1$ and $\epsilon_2$ which we will describe below.

Note that
$$u^{(n+1)}(x) = \left(\delta_a-\delta_b\right)\ast v^{(n)}(x) = v^{(n)}(x-a) - v^{(n)}(x-b)$$
so that if we control the support of $v$ by choosing $0<\epsilon_2<\frac{b-a}{2}$ we get no overlap and therefore
$$\nl u^{(n+1)}\nr_p = 2^{1/p}\nl v^{(n)}\nr_p.$$
Therefore, by applying \eqref{moll-deriv} to the above expression, we can control the size of the derivatives of $u$ by
$$\frac{\nl u^{(n+1)}\nr_p}{M_{n+1}} \leq 2^{1/p} \epsilon_1.$$

To show that our $u$ approximate $\chi_I$ in $L^p$ note that $u$ and $\chi_I$ coincide unless $x\not\in(a,a+\epsilon_2)\cup(b,b+\epsilon_2)$. The asymmetry in this expression is due to the fact that our mollifier $v$ has support $\supp v=[0,\epsilon_2]$. Furthermore $|u|\leq 1$ which simply implies that $|u-\chi_I|\leq 2$ and so we get the desired control
\begin{equation*}\nl u-\chi_I\nr_p \leq \left(2\cdot2^p\cdot\epsilon_2\right)^{1/p}.\qedhere\end{equation*}
\end{proof}

The next step is naturally to do the same for step functions.

\begin{lemma}
For any step function $f$, that is, a finite linear combination of characteristic functions of disjoint intervals:
$$f=\sum_k a_k \chi_{I_k},$$
there exists a Cauchy sequence $\{f_i\}$ in $\C$ such that
$$\nl f-f_i\nr_p \to 0 \and \frac{\nl f_i^{(n)}\nr_p}{M_n} \to 0, \quad i\to\infty.$$
In particular this implies that
$$\norm{f_i} \to \frac{\nl f\nr_p}{M_0}, \quad i\to\infty.$$
\end{lemma}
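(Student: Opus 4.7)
The plan is to approximate each characteristic function appearing in $f$ by means of the preceding lemma, and then form the corresponding linear combination. Write $f = \sum_{k=1}^{N} a_k \chi_{I_k}$ with the $I_k$ pairwise disjoint, so that $\nl f\nr_p^p = \sum_k |a_k|^p |I_k|$. Fix positive null sequences $\epsilon_i \downarrow 0$ and $\eta_i \downarrow 0$. For each $i$ and each $k \in \{1,\dots,N\}$, apply the previous lemma with parameters $\epsilon_i,\eta_i$ to obtain $u_{k,i} \in \C$ such that
$$\nl u_{k,i} - \chi_{I_k}\nr_p < \epsilon_i \and \nl u_{k,i}^{(n)}\nr_p \leq \eta_i M_n \text{ for } n \geq 1.$$
Define $f_i := \sum_{k=1}^N a_k u_{k,i}$; this lies in $\C$ since $\C$ is a vector space and the sum is finite.

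The two required convergences follow from the $p$-power quasi-triangle inequality \eqref{triangle}. Setting $A := \sum_k |a_k|^p$, we obtain
$$\nl f - f_i\nr_p^p \leq \sum_{k=1}^N |a_k|^p \nl \chi_{I_k} - u_{k,i}\nr_p^p \leq A\, \epsilon_i^p \to 0,$$
while for each $n \geq 1$,
$$\nl f_i^{(n)}\nr_p^p \leq \sum_{k=1}^N |a_k|^p \nl u_{k,i}^{(n)}\nr_p^p \leq A\, \eta_i^p\, M_n^p,$$
so that $\nl f_i^{(n)}\nr_p / M_n \leq A^{1/p}\eta_i \to 0$ \emph{uniformly in} $n \geq 1$. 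The same computations, applied to $f_i-f_j$ together with $\nl f_i-f_j\nr_p^p \leq \nl f_i-f\nr_p^p + \nl f-f_j\nr_p^p$, show that $\{f_i\}$ is Cauchy for $\norm{\cdot}$, hence a Cauchy sequence in $\C$.

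For the final claim, write
$$\norm{f_i} = \max\!\left(\tfrac{\nl f_i\nr_p}{M_0},\ \sup_{n\geq 1}\tfrac{\nl f_i^{(n)}\nr_p}{M_n}\right).$$
The second term is bounded by $A^{1/p}\eta_i$ and hence tends to $0$, while the first tends to $\nl f\nr_p/M_0$ by the convergence established above. Assuming $f \neq 0$ (the zero case being trivial), the first term dominates for all sufficiently large $i$, yielding $\norm{f_i} \to \nl f\nr_p/M_0$.

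The only delicate point is that the supremum defining $\norm{\cdot}$ ranges over all orders of derivative, so we must ensure the control on $\nl f_i^{(n)}\nr_p/M_n$ is uniform in $n$; this is precisely why the previous lemma was formulated with a single parameter $\eta$ bounding every derivative, and is what makes the proof essentially routine.
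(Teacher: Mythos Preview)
Your proof is correct and follows essentially the same route as the paper: approximate each $\chi_{I_k}$ via the preceding lemma, form the linear combination, and use the $p$-power triangle inequality to control both the $L^p$-distance to $f$ and the derivative terms uniformly in $n$, then deduce the Cauchy property and the norm limit. The only cosmetic differences are that the paper uses a single null sequence $\eta_i$ for both the $\epsilon$ and $\eta$ parameters of the previous lemma, and that your constant $A=\sum_k|a_k|^p$ carries the correct exponent (the paper writes $\sum_k|a_k|$, which is a harmless slip).
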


Before proceeding to the proof we would like to remind the reader of the fact that a triangle inequality \eqref{triangle} still holds, if we raise all quasi-norms to the power $p$, even though $0<p<1$.

\begin{proof}
For any sequence $\eta_i$ tending to zero we approximate each $\chi_{I_k}$ with $u_{kj}$ by the previous lemma with
$$\nl \chi_{I_k} - u_{ki}\nr_p \leq \eta_i$$
and
$$\nl u_{ki}^{(n)} \nr_p \leq M_n\eta_i, \quad n\geq 1.$$
Using these we set
$$f_i = \sum_k a_k u_{ki}.$$
This function approximate $f$ in $L^p$ by
$$\nl f - f_i \nr_p^p \leq \sum_k |a_k| \nl \chi_{I_k} - u_{ki}\nr_p^p \leq \eta_i^p \left(\sum_k |a_k|\right)$$
and have derivatives satisfying
$$\frac{\nl f_i^{(n)} \nr_p^p}{M_n^p} \leq \eta_i^p \left(\sum_k |a_k|\right), \quad n\geq 1.$$
This clearly implies that
$$\lim_{i\to\infty} \norm{f_i} = \frac{\nl f\nr_p}{M_0}.$$

Finally to see that this sequence is a Cauchy sequence in $\C$ we calculate:
$$
\norm{f_i-f_j}^p = \max\left\{ \frac{\nl f_i - f_j\nr_p^p}{M_0^p}, \sup_{n\geq 1} \frac{\nl f_i^{(n)} - f_j^{(n)}\nr_p^p}{M_n^p} \right\}.
$$
We can estimate the first expression by
\begin{equation}\label{est-triang-1}
\frac{\nl f_i - f_j\nr_p^p}{M_0^p} \leq \frac{\nl f_i - f\nr_p^p}{M_0^p} + \frac{\nl f - f_j\nr_p^p}{M_0^p}
\end{equation}
and the second by
\begin{equation}\label{est-triang-2}
\sup_{n\geq 1} \frac{\nl f_i^{(n)} - f_j^{(n)}\nr_p^p}{M_n^p}\leq  \sup_{n\geq 1} \frac{\nl f_i^{(n)}\nr_p^p}{M_n^p} +  \sup_{n\geq 1} \frac{\nl f_j^{(n)}\nr_p^p}{M_n^p}.
\end{equation}
Thus by previous estimates, both \eqref{est-triang-1} and \eqref{est-triang-2} can be estimated in terms of the numbers $\eta_i$ and the coefficients $a_i$, and we arrive at
\begin{equation*}
\norm{f_i-f_j}^p \leq \left(\eta_i^p+\eta_j^p\right) \left(\sum_k |a_k|\right). \qedhere
\end{equation*}
\end{proof}

The reader who has followed us this far, through the sequence of three technical lemmas, will probably be delighted to see that it pays off. Now follows the rather succinct proof of the main theorem of this section.

\begin{proof}[Proof of Theorem \ref{nlmain}]
We define $\beta$ to be the map from $L^p$ into $\Ccomp$ which maps $f$ to any Cauchy sequence $\{f_i\}$ in $\C$ with the properties that
$$\nl f-f_i\nr_p \to 0,\quad \frac{\nl f_i^{(n)}\nr_p}{M_n} \to 0, \quad i\to\infty.$$
These two properties directly imply the continuity since we have
$$\norm{\beta(f)}:=\lim_{i\to\infty} \norm{f_i} = \frac{\nl f\nr_p}{M_0}.$$
That such a sequence exists for any $f$ is clear since the step functions are dense in $L^p$ and the previous lemma.
Hence we only need to check that this map is both well-defined and injective.

We begin with the well-definedness and to this end let $\{g_i\}$ be another candidate Cauchy sequence in $\C$, to which $f$ could just as well have been mapped.
We want to see that they are equivalent in the completion.
Therefore we argue as in the previous lemma and see that $\norm{f_i-g_i}^p \to 0$ as $i\to\infty$, since all the quantities in the right hand sides of \eqref{est-triang-1} and \eqref{est-triang-2} tend to zero by the assumptions on the sequences.

That $\beta$ is injective is quite easy if we just remember that $\beta(f)=0$ means that a Cauchy sequence $\{f_i\}$ in $\C$ satisfying the above conditions is equivalent to the zero Cauchy sequence.
That is,
$$\norm{f_i}\to 0, \quad i\to\infty,$$
but this implies, by Lemma~\ref{l:ssublp} that
$$\nl f_i \nr_p \leq M_0 \norm{f_i}\to 0, \quad i\to\infty,$$
and therefore $f=0$ in $L^p$.
\end{proof}

\section{Some extension to the unit circle}
We have so far been working exclusively on the line.
However there is not much that does not immediately carry over to the unit circle $\T$.
One thing that was utilized in the proof of Theorem~\ref{completion-is-smooth} that works on $\R$ but not on $\T$ is that $L^p$-functions must attain arbitrarily small values somewhere.
This is not the case on a finite measure space.
For this reason we only get control of the oscillation from the integral mean of a function.
This analogous result, however, turns out to be enough. Here we show how to fill in the details.

All other results are local ones and we expect the rest of the theory to carry over without change.

We shall allow ourselves to keep denoting the quasi-norm
$$
\norm{f}=\sup_{k\geq0}\frac{\nl f^{(k)}\nr_{p}}{M_k}
$$
despite the face that the $p$-norms are now taken as integrals over $\T$.
We will be quite unconventional and choose to not renormalized the measure, so the circle has mass $2\pi$.
This will become apparent in the proof below.

We will consider the class $\ST$ defined, almost exactly as $\S$, by
$$
\ST=\left\{ f\in C^{\infty}(\T): \norm{f}<\infty \and \limsup_{k\to\infty} \nl f^{(k)}\nr_{\infty}^{\q^{k}}\leq 1\right\}.
$$
Note that all $f\in C^{\infty}(\T)$ are periodic in all derivatives.

For this class we will prove analogous result of Theorem~\ref{completion-is-smooth}.
\begin{theorem}\label{completion-is-smooth-T}
Assume that the sequence $\M$ satisfies $\product{M}<\infty$. Then the completion of $\ST$ in the $\ST$-norm can be canonically and continuously embedded into $C^{\infty}(\T)$.
\end{theorem}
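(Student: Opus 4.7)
The proof of Theorem~\ref{completion-is-smooth-T} will parallel that of Theorem~\ref{completion-is-smooth}; the only genuine obstacle is reproducing the supremum-norm estimate of Proposition~\ref{sup-est} on $\T$. Once the analogs of Proposition~\ref{sup-est} and Corollary~\ref{derivative-sup-est} are in hand, the passage from a $\normT{\cdot}$-Cauchy sequence in $\ST$ to a uniform limit in $C^\infty(\T)$ transcribes almost verbatim. The difficulty, flagged in the remarks preceding the theorem, is that on a finite measure space an $L^p$-function need not attain arbitrarily small values, so one cannot simply pick $y$ with $\lvert f(y)\rvert$ as small as one pleases.

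The plan is to exploit periodicity: for every $k\geq 1$ one has $\int_\T f^{(k)}\dd t = 0$, so by continuity the intermediate value theorem yields some $y\in\T$ with $f^{(k)}(y)=0$, and the line-case estimate
$$\nl f^{(k)}\nr_\infty \leq \nl f^{(k+1)}\nr_\infty^{\q}\,\nl f^{(k+1)}\nr_p^p,\quad k\geq 1,$$
carries over without modification. For $f$ itself, which need not have mean zero, I would apply the mean-value property to the continuous function $\lvert f\rvert^p$ to obtain some $y\in\T$ with $\lvert f(y)\rvert^p\leq (2\pi)^{-1}\nl f\nr_p^p$, which gives the base inequality
$$\nl f\nr_\infty \leq (2\pi)^{-1/p}\nl f\nr_p + \nl f'\nr_\infty^{\q}\,\nl f'\nr_p^p.$$
Iterating only the mean-zero bound on $\nl f'\nr_\infty^{\q}$---so that no further correction terms ever appear---yields, after $n$ steps,
$$\nl f\nr_\infty \leq (2\pi)^{-1/p}\nl f\nr_p + \nl f^{(n)}\nr_\infty^{\q^n}\prod_{k=1}^n \nl f^{(k)}\nr_p^{p\q^{k-1}},$$
and letting $n\to\infty$ while using $\limsup \nl f^{(n)}\nr_\infty^{\q^n}\leq 1$ together with $\product{M}<\infty$, I obtain
$$\nl f\nr_\infty \leq \bigl((2\pi)^{-1/p}M_0 + \product{M}\bigr)\normT{f},\quad f\in\ST.$$
The derivative analog of Corollary~\ref{derivative-sup-est} is then even cleaner: for $i\geq 1$, $f^{(i)}$ already has mean zero, so no mean-value correction is needed and the line-case iteration transfers verbatim to give $\nl f^{(i)}\nr_\infty \leq \product{M}^{1/\q^i}\normT{f}$.

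With these two estimates, the remainder is a transcription of the proof of Theorem~\ref{completion-is-smooth}: any $\normT{\cdot}$-Cauchy sequence in $\ST$ is Cauchy in the supremum norm of every derivative, so it converges uniformly---together with all its derivatives---to a function in $C^\infty(\T)$, and the induced map $\ScompT\hookrightarrow C^\infty(\T)$ is linear, injective, and continuous. The main bookkeeping point, and the one I expect to have to be careful with, is that the mean-value correction must enter exactly once at the $k=0$ level; if it reappeared at every step of the iteration, the resulting geometric-like sum of factors $(2\pi)^{-\q^k/p}\to 1$ would diverge. Peeling off the base case from the mean-zero iteration is therefore the crux of the argument.
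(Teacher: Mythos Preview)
Your argument is correct, and the iteration is handled carefully: peeling off the $k=0$ step once, then using the mean-zero property of $f^{(k)}$ for $k\geq 1$ to get genuine zeros (hence the clean bound $\nl f^{(k)}\nr_\infty\leq \nl f^{(k+1)}\nr_\infty^{\q}\nl f^{(k+1)}\nr_p^p$), avoids exactly the divergence you were worried about. The mean-value argument for $\lvert f\rvert^p$ is fine since $\lvert f\rvert^p$ is continuous on $\T$.

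The paper takes a slightly different route. Rather than bounding $\nl f\nr_\infty$ directly, it bounds only the oscillation from the mean, proving $\nl f-f_\T\nr_\infty\leq \product{M}\normT{f}$; the iteration is the same as yours once one observes $f^{(k)}_\T=0$ for $k\geq 1$, so $\nl f'\nr_\infty=\nl f'-f'_\T\nr_\infty$. The derivative corollary is identical to yours. In the proof of the theorem itself the paper then has to do a little extra work: it controls $\operatorname{osc}(f_j-f_k)$ via the first-derivative estimate, and combines this with the $L^p$-Cauchy property and the quasi-triangle inequality to deduce that $\nl f_j-f_k\nr_\infty\to 0$. Your approach buys a cleaner endgame---you get $\nl f\nr_\infty\leq C\normT{f}$ with an explicit constant $C=(2\pi)^{-1/p}M_0+\product{M}$, so the embedding argument is a verbatim transcription of the real-line case. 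The paper's approach isolates the oscillation bound as a standalone statement (useful if one cares about $f-f_\T$ specifically), at the cost of a two-step argument to recover sup-norm convergence of the $f_j$ themselves.
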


This will follow from suitably altered equivalents of the results of Section~2.

\begin{proposition}\label{sup-est-T}
Let $f\in \ST$. Then
$$
\nl f-f_{\T}\nr_{\infty}\leq \product{M}\normT{f},
$$
where $u_{\T}$ denotes the mean
$$
f_{\T}=\frac{1}{2\pi}\int_{\T}f(x)\dd x.
$$
\end{proposition}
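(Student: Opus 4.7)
The plan is to imitate the proof of Proposition \ref{sup-est} almost verbatim, replacing the ``small value somewhere'' trick (which used that $\R$ has infinite measure and $f\in L^p$) with the mean value: since $\T$ is compact and $f$ is continuous, there exists $y\in\T$ with $f(y)=f_{\T}$. Assuming $\normT{f}=1$, I would start from
$$|f(x)-f_{\T}|=|f(x)-f(y)|=\left|\int_{\gamma}f'\dd t\right|\leq \int_{\T}|f'|\dd t,$$
where $\gamma$ is the shorter arc from $y$ to $x$ and the last inequality uses that $\gamma\subset\T$. Splitting $|f'|=|f'|^{p}|f'|^{1-p}$ exactly as in Proposition \ref{sup-est} gives
$$\nl f-f_{\T}\nr_{\infty}\leq \nl f'\nr_{\infty}^{\q}\nl f'\nr_{p}^{p}.$$
This is the reason for not normalizing the measure on $\T$: the factor $\nl f'\nr_p^p=\int_\T |f'|^p$ must match the quantity appearing in $\normT{\cdot}$.

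Next I would iterate, and here the circle is actually more convenient than the line: because any $f\in C^{\infty}(\T)$ and all its derivatives are periodic, we have $(f^{(k)})_{\T}=\frac{1}{2\pi}\int_{\T}f^{(k)}\dd x=0$ for every $k\geq1$. Hence the one-step inequality above, applied to $f^{(k)}$ in place of $f$, reads
$$\nl f^{(k)}\nr_{\infty}=\nl f^{(k)}-(f^{(k)})_{\T}\nr_{\infty}\leq \nl f^{(k+1)}\nr_{\infty}^{\q}\nl f^{(k+1)}\nr_{p}^{p},$$
so no correction term enters at later stages. Iterating $n$ times yields, just as on the line,
$$\nl f-f_{\T}\nr_{\infty}\leq \nl f^{(n)}\nr_{\infty}^{\q^{n}}\prod_{k=1}^{n}\nl f^{(k)}\nr_{p}^{p\q^{k-1}}.$$

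Finally I would invoke $\normT{f}=1$ to bound $\nl f^{(k)}\nr_{p}^{p\q^{k-1}}\leq M_{k}^{p\q^{k-1}}$, use the growth hypothesis $\limsup\nl f^{(n)}\nr_{\infty}^{\q^{n}}\leq 1$ built into $\ST$, let $n\to\infty$, and obtain $\nl f-f_{\T}\nr_{\infty}\leq \prod_{k\geq 1}M_{k}^{p\q^{k-1}}=\product{M}$, which rescales to the stated inequality. The only conceptual point that might look like an obstacle is the one already mentioned---the unavailability of the ``$f(y)$ arbitrarily small'' device; but it is cleanly fixed by picking $y$ so that $f(y)$ is exactly the mean, after which periodicity makes all subsequent iterations cleaner than in the $\R$ case.
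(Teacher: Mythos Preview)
Your proof is correct and follows essentially the same strategy as the paper: establish the one-step inequality $\nl f-f_{\T}\nr_{\infty}\leq \nl f'\nr_{\infty}^{\q}\nl f'\nr_{p}^{p}$, observe that $(f^{(k)})_{\T}=0$ for $k\geq1$ by periodicity, then iterate and pass to the limit exactly as in Proposition~\ref{sup-est}. The only difference is in how the first step is obtained: the paper writes $f(x)-f_{\T}=\frac{1}{2\pi}\int_{\T}(f(x)-f(t))\dd t$ and bounds each $|f(x)-f(t)|$ by $\int_{\T}|f'|$, whereas you invoke the intermediate value theorem to pick a single $y$ with $f(y)=f_{\T}$; both routes yield $|f(x)-f_{\T}|\leq\int_{\T}|f'|$ and are equivalent from that point on.
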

\begin{proof}
We have that
$$
\lvert f(x)-f_{\T}\rvert=\left\lvert f(x)-\frac{1}{2\pi}\int_{\T}f(t)\dd t\right\rvert=\left\lvert \frac{1}{2\pi}\int_{\T}(f(x)-f(t))\dd t\right\rvert.
$$
But
$$
\lvert f(x)-f(t)\rvert\leq\int_{t}^{x}\lvert f'(s)\rvert \dd s\leq \int_{\T}\lvert f'(s)\rvert \dd s\quad (x,t\in\T).
$$
Putting these two expressions together yields
$$
\lvert f(x)-f_{\T}\rvert \leq \frac{1}{2\pi}\int_{\T}\int_{\T}\lvert f'(s)\rvert\dd s\dd t=\int_{\T}\lvert f'(s)\rvert\dd s\quad (x\in\T).
$$
Now take supremum over $x$, and write $\lvert f'\rvert=\lvert f'\rvert^{p}\lvert f'\rvert^{1-p}$ to obtain
$$
\nl f-f_{\T}\nr_{\infty}\leq\int_{\T}\lvert f'(t)\rvert\dd t\leq \nl f'\nr_{\infty}^{1-p}\nl f'\nr_{p}^{p}.
$$
Observe that $f^{(n)}_{\T}=0$ for all $n\geq1$: indeed all derivatives $f^{(k)}, k\geq0$ are periodic, and we can evaluate the integral defining $f^{(n)}_{\T}$ as
$$
f^{(n)}_{\T}=\int_0^{2\pi} f^{(n)}(t) \dd t = f^{(n-1)}(2\pi)-f^{(n-1)}(0)=0.
$$
Thus $\nl f'\nr_{\infty}^{1-p}\nl f'\nr_{p}^{p}=\nl f'-f'_{\T}\nr_{\infty}^{1-p}\nl f'\nr_{p}^{p}$.
We are now in a position to iterate this procedure (replacing first $f-f_{\T}$ by $f'-f'_{\T}$, etc.) to obtain
$$
\nl f-f_{\T}\nr_{\infty}\leq \nl f^{(n)}-f^{(n)}_{\T}\nr_{\infty}^{\q^{n}}\times \prod_{k=1}^{n}\nl f^{(k)}\nr_{p}^{p\q^{k-1}}.
$$
Thus we are left in the situation of the proof of Proposition \ref{sup-est}, which we know how to handle.
\end{proof}

As before we can rephrase this result so that it applies to the derivatives.

\begin{corollary}\label{derivative-sup-est-T}
Let $f\in\ST$. Then
$$
\nl f^{(i)}\nr_{\infty}\leq\product{M}^{1/\q^{i}}\normT{f} \quad (i=1,2,\ldots).
$$
\end{corollary}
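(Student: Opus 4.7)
The plan is to adapt the argument of Corollary~\ref{derivative-sup-est} to the circle, substituting Proposition~\ref{sup-est-T} for Proposition~\ref{sup-est}. The essential observation, already used in the proof of Proposition~\ref{sup-est-T}, is that for every $j\geq 1$ the mean $f^{(j)}_\T$ vanishes by periodicity, so that $\nl f^{(j)}-f^{(j)}_\T\nr_\infty = \nl f^{(j)}\nr_\infty$. This is precisely what allows the toroidal estimate to control $\nl f^{(i)}\nr_\infty$ with no mean-correction appearing on the left-hand side, exactly as stated in the corollary.

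Normalize so that $\normT{f}=1$ and fix $i\geq 1$. Running the iteration from the proof of Proposition~\ref{sup-est-T} starting with $f^{(i)}$ in place of $f$ yields
$$\nl f^{(i)}\nr_\infty = \nl f^{(i)}-f^{(i)}_\T\nr_\infty \leq \nl f^{(n+i)}\nr_\infty^{\q^n} \prod_{k=1}^n \nl f^{(i+k)}\nr_p^{p\q^{k-1}},$$
where the vanishing of $f^{(j)}_\T$ for all $j\geq 1$ removes the mean-correction on both sides. The bound $\nl f^{(i+k)}\nr_p\leq M_{i+k}$ combined with the regrouping from Corollary~\ref{derivative-sup-est},
$$\prod_{k=1}^n M_{i+k}^{p\q^{k-1}} = \left(\prod_{k=1}^n M_{i+k}^{p\q^{i+k-1}}\right)^{1/\q^i} \leq \product{M}^{1/\q^i},$$
then gives $\nl f^{(i)}\nr_\infty \leq \nl f^{(n+i)}\nr_\infty^{\q^n} \product{M}^{1/\q^i}$.

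Letting $n\to\infty$ and invoking $\limsup_n \nl f^{(n+i)}\nr_\infty^{\q^n}\leq 1$ — a direct consequence of the defining condition on $\ST$, as in Corollary~\ref{derivative-sup-est} — produces the desired estimate. There is no substantive obstacle here beyond checking the vanishing of the higher means; the argument is a near-verbatim transcription of the line case, with the mean-correction showing up only at the base step, which the hypothesis $i\geq 1$ silently absorbs.
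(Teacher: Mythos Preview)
Your proof is correct and follows essentially the same route as the paper: normalize, note that $f^{(j)}_\T=0$ for $j\geq 1$, iterate the estimate of Proposition~\ref{sup-est-T} starting from $f^{(i)}$, and handle the resulting product exactly as in Corollary~\ref{derivative-sup-est}. If anything, your exposition is slightly more explicit than the paper's about why the vanishing of the higher means removes the mean-correction at each step.
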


\begin{proof}
As before assume that $\normT{f}=1$ and use the same argument as in Proposition \ref{sup-est} but starting with $f^{(i)}$ instead of $f$ (remember that $f_\T^{(i)}=0$).
After the iteration we then have
$$\nl f^{(i)}\nr_{\infty}\leq \nl f^{(i+n)}\nr_{\infty}^{\q^{n}}\prod_{k=1}^{n}\nl f^{(i+k)}\nr_{p}^{p\q^{k-1}}.$$
Here we manipulate the product exactly as in Corollary \ref{derivative-sup-est} and get
$$\nl f^{(i)}\nr_{\infty}\leq\product{M}^{1/\q^{i}}$$
where we have let $n$ tend to infinity.
Hence the desired inequality follows.
\end{proof}

\begin{proof}[Proof of Theorem~\ref{completion-is-smooth-T}]
First observe that the sequence of derivatives is a Cauchy sequence in the supremum norm.
Indeed, by Corollary \ref{derivative-sup-est-T} we have that for $i\geq1$:
$$
\nl f_{j}^{(i)}-f_{k}^{(i)}\nr_{\infty}\leq\mu^{1/\q^{i}}\normT{f_j-f_k}\to0, \quad j,k\to\infty.
$$
Denote the oscillation of a function $h$ on $\T$ by $\operatorname{osc}(h)$. Then since $\{f'_j\}$ is Cauchy in sup-norm and $p$-norm (the latter norm is controlled by $\norm{\cdot}$-norm) we have
$$
\operatorname{osc}(f_j-f_k)=\sup_{x,y\in\T}\lvert (f_j-f_k)(x)-(f_j-f_k)(y)\rvert\leq \nl f'_j - f'_k\nr_{\infty}^{1-p}\nl f'_j-f'_k\nr_{p}^{p}
$$
by a now familiar argument. The $L^{p}$-norm clearly tends to $0$ as $j,k\to \infty$, and by the above calculation the same holds for the supremum norm expression.

For simplicity of notation, set $h_{j,k}=f_j-f_k$. For any $x\in\T$ we find that
\begin{multline*}
\lvert h_{j,k}(x)\rvert=\frac{1}{2\pi}\nl h_{j,k}(x)-h_{j,k}(y)+h_{j,k}(y)\nr_{p} \\ \leq \frac{K}{2\pi} \left[\left(\int_{\T}\lvert h_{j,k}(x)-h_{j,k}(y)\rvert^{p} \dd y\right)^{1/p}+\nl h_{j,k}\nr_{p} \right],
\end{multline*}
where $K$ is the constant required for the quasi-triangle inequality to hold. Taking supremum over all $x\in\T$ we arrive at
$$
\nl h_{j,k}\nr_{\infty}\leq \frac{K}{2\pi} \left(\operatorname{osc}(h_{j,k})+\nl h_{j,k}\nr_{p}\right)\to0,\quad j,k\to\infty,
$$
establishing the assertion of the theorem.
\end{proof}

\section{Conclusions and conjectures}

As discussed in the introduction, our starting point in these investigations was the observation made by Peetre \cite{Peetre}, that Sobolev spaces for $0<p<1$ behaves pathologically. The space $W^{k,p}$, defined as an abstract completion of $C^{\infty}$ with respect to the usual Sobolev quasi-norm is actually topologically isomorphic to $L^p$. We study the case when $k=\infty$, with a weighted and slightly different norm which is more inspired by expressions encountered in the study of Carleman classes on the real line;
$$
\norm{f}=\sup_{k\geq0}\frac{\nl f\nr_{p}}{M_k}
$$
where $\M:=\seq{M}$ is a weight sequence.

If one considers the completion $\Ccomp$ of the whole of $C^{\infty}$ with respect to this norm, the situation turns out to be much alike the one Peetre encountered for finite $k$. Indeed, a crucial ingredient in proving the isomorphism $W^{k,p}\cong L^{p}$ is the existence of a canonical, continuous injection $\beta: L^{p}\to W^{k,p}$ and we prove the existence of such a mapping in Theorem~\ref{nlmain}.

If one considers completions of another subclass, which we denote by $\S$, something completely different happens. A first result is that when taking the completion with respect to the quasi-norm $\norm{\cdot}$ one ends up with a subspace of $C^{\infty}$, provided that $\product{M} < \infty$ where $\product{M}$ is an expression describing the growth of $\M$. This result is sharp up to some regularity assumptions on $\M$, in the sense that when $\product{M}=\infty$ no such embedding is possible.

We expect that this is not all there is to it. We believe strongly that the following conjectures holds true.
\begin{conjecture}
For the space $\Ccomp$ we have
\begin{equation*}\label{uncoupling}
\Ccomp  \cong L^{p}\times L^{p} \times L^{p} \times \cdots
\end{equation*}
\end{conjecture}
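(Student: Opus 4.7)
The plan is to realize $\Ccomp$ as a weighted $\ell^p$-sum of copies of $L^p$ via a ``derivative-limit'' map. Let $X = \bigl\{(g_n)_{n\geq 0} : g_n \in L^p(\R),\ \sum_n \nl g_n\nr_p^p/M_n^p < \infty\bigr\}$, equipped with the quasi-norm $\nl (g_n)\nr_X := \bigl(\sum_n \nl g_n\nr_p^p/M_n^p\bigr)^{1/p}$. Rescaling slot by slot identifies $X$ with the countable product $L^p \times L^p \times \cdots$ from the conjecture as quasi-Banach spaces. I will work with the equivalent $\ell^p$-type description of $\norm{\cdot}$ on $\Ccomp$ mentioned in the Remark after Theorem~\ref{nlmain}.

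First I would define $D : \Ccomp \to X$ by sending $f$, represented by any Cauchy sequence $(f_i)\subset \C$, to $(g_n)$ where $g_n := \lim_i f_i^{(n)}$ in $L^p$. The limit exists because $\nl f_i^{(n)} - f_j^{(n)}\nr_p \leq M_n\norm{f_i-f_j}$, and the $p$-subadditivity \eqref{triangle} applied slotwise gives the isometric identity $\nl D(f)\nr_X = \norm{f}_{\Ccomp}$, so in particular $D$ is injective. What remains is surjectivity, for which I would construct, for each $n\geq 0$, a continuous embedding $\beta_n : L^p(\R) \to \Ccomp$ such that $\beta_n(g)$ is represented by a Cauchy sequence $(w_i)\subset \C$ with $w_i^{(n)} \to g$ and $w_i^{(k)} \to 0$ in $L^p$ for all $k \neq n$. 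For any $(g_n)\in X$, the sum $\sum_n \beta_n(g_n)$ then converges in $\Ccomp$ by $p$-absolute convergence, since $\sum_n \norm{\beta_n(g_n)}^p = \sum_n \nl g_n\nr_p^p/M_n^p < \infty$, and gives a two-sided inverse of $D$.

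The case $n = 0$ of the construction of $\beta_n$ is Theorem~\ref{nlmain}. For $n\geq 1$ I would mirror the three-lemma chain preceding Theorem~\ref{nlmain} with the following key modification. Given an interval $I$, produce a smooth compactly supported $\tilde\psi_j$ which is $L^p$-close to $\chi_I$ \emph{and} whose moments $\int x^k \tilde\psi_j\dd x$ vanish for $k = 0, 1, \ldots, n-1$. The moment vanishing is arranged by subtracting $n$ arrays of narrow spikes from $\chi_I$; the crucial use of $p<1$ is that a spike of amplitude $c/\delta$ on an interval of length $\delta$ has $L^p$-norm of order $\delta^{1/p - 1} \to 0$, so the corrections can be made simultaneously moment-correct and $L^p$-negligible. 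Smoothing $\tilde\psi_j$ by the mollifier of Lemma~\ref{nlmollifier} preserves these vanishing moments, and the $n$-fold antiderivative $w_j$ of the smoothed function is therefore compactly supported. One then verifies: $w_j^{(n)} \approx \chi_I$ in $L^p$; for $k < n$, the oscillatory structure makes $\nl w_j^{(k)}\nr_\infty$ at most of order (oscillation scale)$^{n-k}$, so its $L^p$-norm is small; for $k > n$, $w_j^{(k)}$ is a convolution of $\tilde\psi_j$ with $v^{(k-n)}$, controllable through Lemma~\ref{nlmollifier}. Linearity and a diagonal procedure extend the construction from indicators to step functions, and then to general $g \in L^p(\R)$, exactly as in the path to Theorem~\ref{nlmain}.

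The principal obstacle is the multi-scale balancing required by the construction of $\beta_n$ for $n\geq 1$: the number and width of the correcting spikes, the spline parameters of the mollifier, and the precision target at each derivative level $k$ must be tuned simultaneously so that $w_j\in\C$ (that is, $\norm{w_j}<\infty$) and each $\nl w_j^{(k)}\nr_p / M_k$ can be forced below any prescribed threshold. This runs parallel to, but is considerably more intricate than, the spline machinery of Section~3.1, since every derivative level must be controlled at once. Once the $\beta_n$ are in hand, the convergence of $\sum_n \beta_n(g_n)$ in $\Ccomp$ and the identity $D\circ E = \mathrm{id}_X$ (whence $E\circ D = \mathrm{id}_{\Ccomp}$ by injectivity of $D$) are routine.
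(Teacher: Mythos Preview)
This statement appears in the paper as a \emph{conjecture}; the authors give no proof, so there is nothing to compare against---you are attempting to settle a question they leave open. Your architecture (the isometric derivative-limit map $D$ and slotwise right inverses $\beta_n$) is the natural one, and $n=0$ is indeed Theorem~\ref{nlmain}. But there is a genuine gap at the outset: you invoke ``the equivalent $\ell^p$-type description of $\norm{\cdot}$'' from the Remark after Theorem~\ref{nlmain}, yet that Remark only says Theorems~\ref{completion-is-smooth} and~\ref{nlmain} also hold for the $\ell^p$-sum quasi-norm, not that the two quasi-norms are equivalent on $\C$. They are not in general: one always has $\norm{f}_{\sup}\le\norm{f}_{\ell^p}$, with no uniform reverse bound, so the completion can change when you switch norms. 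Under the paper's supremum norm the map $D$ is an isometry into the weighted $\ell^\infty$-sum rather than your $X$, and your surjectivity argument via $\sum_n\beta_n(g_n)$ fails there, since the partial sums need not be Cauchy in $\norm{\cdot}_{\sup}$ when $\sup_n\nl g_n\nr_p/M_n<\infty$ but $\sum_n(\nl g_n\nr_p/M_n)^p=\infty$. At best your scheme, once made precise, proves the analogous statement for the $\ell^p$-sum completion, which is a different (and a priori smaller) space than the paper's $\Ccomp$.

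A secondary point: the claim that $\nl w_j^{(k)}\nr_\infty$ is of order $(\text{oscillation scale})^{n-k}$ for $k<n$ does not follow automatically from vanishing moments. Already for $n=2$, if the $N$ correcting spikes are placed arbitrarily in the sub-intervals of width $1/N$, the first antiderivative $w_j'$ is a one-signed sawtooth of height $\sim 1/N$, whence $\nl w_j\nr_\infty\sim 1/N$ rather than $1/N^2$. One must position the spikes so that each successive antiderivative has mean zero on every period (midpoints suffice for $n=2$; higher $n$ requires a more elaborate node placement). This is precisely the multi-scale balancing you identify as the principal obstacle, and it needs a genuine construction, not a one-line assertion.
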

\begin{conjecture}
Assume that $\product{M}=\infty$ and that the regularity assumptions of Theorem~\ref{prod-infty-main} are satisfied. Then
$$
\Scomp\cong L^{p}\times L^{p} \times L^{p} \times \cdots
$$
\end{conjecture}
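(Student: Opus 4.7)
The natural candidate for the isomorphism is the \emph{full derivative map} $\Psi\colon \Scomp \to \prod_{k\geq 0} L^p(\R)$ sending an equivalence class $[(f_n)]$ of Cauchy sequences in $\S$ to the coordinatewise $L^p$-limits $(g_k)_{k\geq 0}$, where $g_k := \lim_{n\to\infty} f_n^{(k)}$. The plan is to prove that $\Psi$ is a topological isomorphism onto the weighted product $X := \{(g_k) : \sup_k \nl g_k\nr_p/M_k < \infty\}$ equipped with its sup-quasi-norm, and then to observe that $X$ is isomorphic to $L^p \times L^p \times \cdots$ via the coordinatewise rescaling $(g_k)\mapsto (g_k/M_k)$ (this final step is essentially formal once the target $X$ has been identified).

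The easy directions should be dispatched first. Well-definedness of $\Psi$ is immediate from the inequality $\nl f_n^{(k)} - f_m^{(k)}\nr_p \leq M_k \norm{f_n - f_m}$, which also yields the isometric continuity $\sup_k \nl g_k\nr_p/M_k = \lim_n \norm{f_n}$. For injectivity, if all $g_k = 0$, passing $m\to\infty$ in that same Cauchy inequality gives $\nl f_n^{(k)}\nr_p \leq M_k \epsilon$ uniformly in $k$ for $n$ large, so $\norm{f_n}\to 0$ and $[(f_n)] = 0$ in $\Scomp$.

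The real content is surjectivity. Given a target $(g_k) \in X$, one must construct a Cauchy sequence $(f_n) \subset \S$ with $f_n^{(k)} \to g_k$ in $L^p$ for every $k$. The strategy is to decouple the derivatives. Approximate each $g_k$ in $L^p$ by a step function, and for each characteristic $\chi_I$ appearing in the target $k$-th coordinate construct a family of smooth bumps $\phi_{k,j} \in \S$ such that $\phi_{k,j}^{(k)} \to \chi_I$ in $L^p$ while $\phi_{k,j}^{(m)} \to 0$ in $L^p$ for every $m\neq k$. Summing over coordinates, invoking density of step functions, and extracting a diagonal subsequence should then produce the required $(f_n)$. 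The bumps $\phi_{k,j}$ are natural candidates for a construction by iterated antidifferentiation of the infinite-convolution atoms $u_j$ from Section~3.1: since $u_j$ has integral $1$, shrinking support, and can be placed in $\S$ (this is exactly what the proof of Theorem~\ref{prod-infty-main} achieves under the assumed regularity on $\M$), its $k$-fold antiderivative $U_j$ would satisfy $U_j^{(k)} = u_j$, approximating a unit mass concentrated at a point, while its lower-order derivatives $U_j^{(m)}$ for $m < k$ are bounded step-like functions on a support of vanishing size, hence tend to $0$ in $L^p$.

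The hard part will be that the single function $f_n$ so constructed must simultaneously have controlled $\norm{\cdot}$-quasi-norm and satisfy the growth condition $\limsup_k \nl f_n^{(k)}\nr_\infty^{\q^k} \leq 1$ defining $\S$, with these properties robust under the superposition across all coordinates $k$. Tuning the defining parameter sequence $\seq{a}$ so that the resulting bump deposits its $L^p$-mass at the correct derivative level --- namely the $k$-th --- without excess leakage into neighboring derivative orders is the delicate part. The hypotheses of Theorem~\ref{prod-infty-main}, in particular $\product{M}=\infty$ which previously furnished just enough slack to blow up the supremum norm of a single bump, should again provide the necessary flexibility; but turning that flexibility into a genuine surjection will likely require a refined parametric version of the infinite-convolution machinery, tailored simultaneously to all derivative orders rather than one at a time, and this is where the proof of the conjecture remains to be completed.
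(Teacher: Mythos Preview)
The statement you are attempting to prove is explicitly labeled a \emph{conjecture} in the paper (Section~5), and the paper offers no proof of it; the authors write only that they ``believe strongly'' it holds. There is therefore nothing in the paper to compare your argument against. Your own proposal candidly acknowledges in its final sentence that ``the proof of the conjecture remains to be completed,'' so at best you have outlined a plausible strategy rather than given a proof.

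That said, there is a concrete error in the sketch that should be flagged. You propose to build the bump $\phi_{k,j}$ as a $k$-fold antiderivative $U_j$ of the atom $u_j$, and then assert that for $m<k$ the lower-order derivatives $U_j^{(m)}$ are ``bounded step-like functions on a support of vanishing size, hence tend to $0$ in $L^p$.'' This is false. Since $\int u_j = 1$, the first antiderivative of $u_j$ rises from $0$ to $1$ and stays there; it is not compactly supported and is not even in $L^p(\R)$. Further antiderivatives grow polynomially. So the lower-order derivatives of $U_j$ do \emph{not} have vanishing support, and the stated mechanism for sending them to $0$ in $L^p$ simply does not apply. The decoupling you need between $\phi^{(k)}_{k,j}\to\chi_I$ and $\phi^{(m)}_{k,j}\to 0$ for $m<k$ is precisely the Peetre--Douady pathology, and it must be engineered by a genuinely $L^p$-specific construction (as in the paper's Lemmas~\ref{nlmollifier}--\ref{nlstep} for the level-$0$ case), not by naive antidifferentiation.

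The remaining difficulty you identify --- simultaneously enforcing the growth condition $\limsup_k \nl f_n^{(k)}\nr_\infty^{\q^k}\leq 1$ across the superposition of all coordinate bumps --- is real and is presumably the reason the authors left this as a conjecture.
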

It is entirely possible that one could weaken or drop some regularity assumptions, but on this point we feel that we should not say too much.

\bibliography{References}{}

\providecommand{\bysame}{\leavevmode\hbox to3em{\hrulefill}\thinspace}
\providecommand{\MR}{\relax\ifhmode\unskip\space\fi MR }
\providecommand{\MRhref}[2]{%
  \href{http://www.ams.org/mathscinet-getitem?mr=#1}{#2}
}
\providecommand{\href}[2]{#2}
\begin{thebibliography}{10}

\bibitem{Ahlberg}
J.~H. Ahlberg, E.~N. Nilson, and J.~L. Walsh, \emph{The theory of splines and
  their applications}, Academic Press, New York-London, 1967. \MR{0239327 (39
  \#684)}

\bibitem{HedenmalmBorichev}
Alexander Borichev and H{\aa}kan Hedenmalm, \emph{Weighted integrability of
  polyharmonic functions}, pre-print (2012).

\bibitem{Cohen}
P.~J. Cohen, \emph{A simple proof of the {D}enjoy-{C}arleman theorem}, Amer.
  Math. Monthly \textbf{75} (1968), 26--31. \MR{0225957 (37 \#1547)}

\bibitem{Day}
Mahlon~M. Day, \emph{The spaces {$L^p$} with {$0<p<1$}}, Bull. Amer. Math. Soc.
  \textbf{46} (1940), 816--823. \MR{0002700 (2,102b)}

\bibitem{DeVore}
Ronald~A. DeVore and George~G. Lorentz, \emph{Constructive approximation},
  Grundlehren der Mathematischen Wissenschaften [Fundamental Principles of
  Mathematical Sciences], vol. 303, Springer-Verlag, Berlin, 1993. \MR{1261635
  (95f:41001)}

\bibitem{Garnett}
John~B. Garnett, \emph{Bounded analytic functions}, Pure and Applied
  Mathematics, vol.~96, Academic Press, Inc. [Harcourt Brace Jovanovich,
  Publishers], New York-London, 1981. \MR{628971 (83g:30037)}

\bibitem{Hardy1932}
Littlewood~J.E. Hardy, G.H., \emph{Some properties of conjugate functions.},
  Journal f{\"u}r die reine und angewandte Mathematik \textbf{167} (1932),
  405--423 (eng).

\bibitem{private}
H{\aa}kan Hedenmalm, \emph{private communication},  (2013).

\bibitem{HORM1}
Lars H{\"o}rmander, \emph{The analysis of linear partial differential
  operators. {I}}, second ed., Grundlehren der Mathematischen Wissenschaften
  [Fundamental Principles of Mathematical Sciences], vol. 256, Springer-Verlag,
  Berlin, 1990, Distribution theory and Fourier analysis. \MR{1065993
  (91m:35001a)}

\bibitem{Katznelson}
Yitzhak Katznelson, \emph{An introduction to harmonic analysis}, third ed.,
  Cambridge Mathematical Library, Cambridge University Press, Cambridge, 2004.
  \MR{2039503 (2005d:43001)}

\bibitem{Peetre}
Jaak Peetre, \emph{A remark on {S}obolev spaces. {T}he case {$0<p<1$}}, J.
  Approximation Theory \textbf{13} (1975), 218--228, Collection of articles
  dedicated to G. G. Lorentz on the occasion of his sixty-fifth birthday, III.
  \MR{0374900 (51 \#11096)}

\bibitem{PeetreBook}
\bysame, \emph{New thoughts on {B}esov spaces}, Mathematics Department, Duke
  University, Durham, N.C., 1976, Duke University Mathematics Series, No. 1.
  \MR{0461123 (57 \#1108)}

\end{thebibliography}
\bibliographystyle{amsplain}

\end{document}